\numberwithin{equation}{section}
\newtheorem{theorem}{Theorem}[section]
\newtheorem*{theorema}{Theorem A}
\newtheorem{lemma}{Lemma}[section]
\theoremstyle{remark}
\newtheorem*{ack}{Acknowledgment}
\def\F{\mathcal{F}}
\def\R{\mathbb{R}}
\def\C{\mathbb{C}}
\def\N{\mathbb{N}}
\def\D{\mathbb{D}}
\def\re{\operatorname{Re}}
\def\im{\operatorname{Im}}
\begin{document}
\title{Radially distributed values and normal families}
\author{Walter Bergweiler and Alexandre Eremenko\thanks{Supported by NSF grant DMS-1665115}}
\date{}
\maketitle
\begin{abstract}
Let $L_0$ and $L_1$ be two distinct rays emanating from the origin and let 
${\mathcal F}$ be the family of all functions holomorphic in the unit disk ${\mathbb D}$
for which all zeros lie on $L_0$ while all $1$-points lie on $L_1$.
It is shown that ${\mathcal F}$ is normal in~${\mathbb D}\backslash\{0\}$.
The case where $L_0$ is the positive real axis and $L_1$ is the negative
real axis is studied in more detail.
\end{abstract}
\section{Introduction and results} \label{intro}
A major guideline in the theory of normal families is the heuristic Bloch principle
which says that the family of all holomorphic functions having a certain property 
is likely to be normal if all entire functions with this property are constant.
The classical example is the property to omit the values $0$ and $1$, in which
case the statement about normal families is Montel's theorem while the statement about
entire functions is Picard's theorem. 
For a thorough discussion of Bloch's principle, including numerous further examples 
(and counter-examples), we refer to~\cite{Bergweiler2006}, \cite{St}
and~\cite{Zalcman1998}.

There is a considerable literature on entire (and meromorphic) 
functions with radially distributed zeros and $1$-points.
In contrast, the question whether the radial distribution of zeros and $1$-points relates to 
normality does not seem to have been studied yet. In this paper we obtain results of this
type.

First we mention some results about entire functions with radially distributed values.
A classical theorem of Edrei~\cite{Edrei1955} says
 that if the zeros and $1$-points of an entire function $f$ lie
on finitely many rays, and if $\omega$ is the smallest angle between these rays,
then the order of $f$ is at most $\pi/\omega$.
Together with results of Biernacki \cite[p.~533]{Biernacki1929} or Milloux~\cite{Milloux1927}
this yields the following.
\begin{theorema} 
There is no transcendental entire function for which all zeros lie on one ray and 
all $1$-points lie on a different ray.
\end{theorema}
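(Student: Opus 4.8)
The plan is to argue by contradiction: suppose $f$ is a transcendental entire function whose zeros all lie on a ray $L_0$ and whose $1$-points all lie on a distinct ray $L_1$, and let $\omega\in(0,\pi]$ be the angle between them. Since the two rays are fixed data, Edrei's theorem applies verbatim and shows immediately that $f$ has finite order $\rho\le\pi/\omega$. After a rotation I may assume $L_0=\{\arg z=0\}$ and $L_1=\{\arg z=\gamma\}$ with $\gamma\in(0,2\pi)$, $\gamma\neq0$. The whole difficulty is thereby reduced to ruling out a transcendental $f$ \emph{of finite order} with this radial distribution, and this is exactly where the results of Biernacki and Milloux enter.

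First I would dispose of the degenerate case in which one of the two value sets is finite. If $f$ has only finitely many zeros then, $f$ having finite order, Hadamard's theorem gives $f=Q\,e^{g}$ with $Q$ a polynomial and $g$ a polynomial of degree $d\ge1$ (as $f$ is transcendental). The $1$-points of $f$ are the zeros of $Qe^{g}-1$, and as $|z|\to\infty$ these accumulate along the $2d$ directions solving $\re(cz^{d})=0$ for the leading term $cz^{d}$ of $g$, i.e.\ the rays on which $\re g$ stays bounded. Since $2d\ge2$, the $1$-points cannot be confined to a single ray, a contradiction. Applying the same argument to $f-1$ rules out finitely many $1$-points, so from now on $f$ has infinitely many zeros and infinitely many $1$-points.

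In the main case I would compare Phragm\'en--Lindel\"of indicators. By Hadamard's factorization $f=z^{m}P_0 e^{g}$ and $f-1=P_1 e^{h}$, with $P_0,P_1$ the canonical products over the zeros and over the $1$-points and $g,h$ polynomials of degree $\le\rho$; since $f$ and $f-1$ differ by a constant their indicators coincide, $h_f=h_{f-1}$. The input I would take from Biernacki and Milloux is that the indicator of a canonical product whose zeros lie on a \emph{single} ray $\{\arg z=\alpha\}$ is a nonnegative trigonometric ``bump'' concentrated about the opposite direction $\alpha+\pi$; for non-integer $\rho$ and regularly distributed zeros of density $\Delta$,
\[
h_{P}(\theta)=\frac{\pi\Delta}{\sin\pi\rho}\,\cos\rho\bigl(\pi-|\theta-\alpha|\bigr),\qquad |\theta-\alpha|\le\pi .
\]
In the cleanest range $0<\rho<1$ the polynomials $g,h$ are constant, so $h_f=h_{P_0}$ and $h_{f-1}=h_{P_1}$ exactly; the first bump is centred opposite $L_0$ and the second opposite $L_1$, and since $L_0\neq L_1$ they cannot agree, contradicting $h_f=h_{f-1}$. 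This already exhibits the entire mechanism, and it crucially uses that each value is confined to one ray rather than to a full line.

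The step I expect to be the main obstacle is making this indicator comparison rigorous when $\rho$ is a positive integer and when the zeros are not regularly distributed. For integer $\rho$ the factors $e^{g},e^{h}$ contribute pure frequency-$\rho$ terms $\re(c_g e^{i\rho\theta})$ and $\re(c_h e^{i\rho\theta})$ to the two indicators, and a canonical product with zeros on a ray and integer exponent of convergence can carry a matching frequency-$\rho$ component; equating $h_f$ and $h_{f-1}$ then only constrains these coefficients, so the contradiction must be extracted from the lower-frequency part of the indicator or from a finer (completely-regular-growth or density) analysis of the zero distribution. This is precisely the technical heart supplied by the cited results. The case $\rho=0$, where the indicator degenerates, I would treat separately, again showing that order-zero canonical products on two distinct rays cannot have equal growth in all directions.
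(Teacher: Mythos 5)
Your route is genuinely different from the paper's. The paper does not prove Theorem~A by the Edrei--Biernacki--Milloux mechanism; it derives it from Theorem~\ref{thm1}: one picks $z_k\to\infty$ with $|f(z_k)|\le 1$ (possible for any transcendental entire $f$), rescales to $f_k(z)=f(2|z_k|z)$, observes that the $f_k$ still have all zeros on $L_0$ and all $1$-points on $L_1$ and hence form a normal family in $\mathbb{D}\setminus\{0\}$, and then contradicts normality on the circle $|z|=\tfrac12$, where $\min_{|z|=1/2}|f_k|\le 1$ while $\max_{|z|=1/2}|f_k|\to\infty$. That argument is self-contained modulo Theorem~\ref{thm1} and entirely avoids indicators, Hadamard factorization and the integer-order difficulty. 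What you are reconstructing is the classical provenance that the paper merely cites in the sentence before the theorem.

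Within your own route there is a genuine gap, and it sits exactly where you flag it --- but it is not a corner case, it is the whole theorem. Edrei's bound is $\rho\le\pi/\omega$ with $\omega\le\pi$, so it \emph{never} places you in the range $0<\rho<1$ that your indicator computation handles: the bound always permits $\rho=1$, and for opposite rays (the configuration the paper studies most closely in Theorem~\ref{thm2}) the critical case is precisely $\rho=1$, an integer, where the exponential factors $e^{g},e^{h}$ contribute frequency-$1$ terms that can absorb the discrepancy between the two bumps. In addition, the Lindel\"of-type formula you quote for $h_P$ presupposes that the zeros have an angular density $\Delta$, which nothing in the hypotheses provides; without regular distribution the indicator of a canonical product with zeros on a ray need not have that form. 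So the case you dispose of cleanly may be vacuous, and the case you defer to ``the cited results'' is the entire content of the theorem. If the intent is to import Biernacki/Milloux as a black box (which is consistent with how the paper introduces Theorem~A), you should state the precise result being imported --- essentially, that a transcendental entire function of finite order cannot have all zeros on one ray and all $1$-points on a different ray --- and then your indicator discussion becomes motivation rather than proof. Your disposal of the finitely-many-zeros case via $f=Qe^{g}$ and the $2d\ge 2$ critical directions is sound in outline.
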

It is a simple consequence of Rolle's Theorem that a (non-constant) polynomial
for which all zeros lie on one ray and all $1$-points lie on a different ray has degree~$1$.

The following result is a normal family analogue of Theorem~A.
Here $\D$ denotes the unit disk.
\begin{theorem}\label{thm1}
Let $L_0$ and $L_1$ be two distinct rays emanating from the origin and 
let $\F$ be the family of all functions holomorphic in $\D$ for which all zeros 
lie on $L_0$ and all $1$-points lie on $L_1$.
Then $\F$ is normal in $\D\backslash\{0\}$.
\end{theorem}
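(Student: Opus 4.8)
The plan is to argue by contradiction and localize. The two rays divide $\D$ into two sectors, and on each of them every $f\in\F$ omits both $0$ and $1$; so Montel's theorem shows that $\F$ is normal at every point of $\D\setminus(\{0\}\cup L_0\cup L_1)$. Replacing $f$ by $1-f$ interchanges the values $0$ and $1$ together with the rays $L_0$ and $L_1$, so it suffices to prove normality at an arbitrary $z_0\in L_0\setminus\{0\}$. Assume $\F$ is not normal at such a $z_0$. By Zalcman's lemma there are $f_n\in\F$, points $z_n\to z_0$ and numbers $\rho_n\to 0^+$ such that $g_n(\zeta):=f_n(z_n+\rho_n\zeta)$ converges to a nonconstant limit $g$ locally uniformly in the spherical metric, with $g^\#(\zeta)\le g^\#(0)=1$. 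Since the $f_n$ have no poles, Hurwitz's theorem forces the spherical limit $g$ to be entire (a pole of $g$ would produce zeros of $1/g_n$, i.e.\ poles of $g_n$), and the bound $g^\#\le 1$ gives $T(r,g)=O(r^2)$, so $g$ has order at most $2$.

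Next I would locate the values of $g$. Writing $L_j=\{t e^{i\alpha_j}:t\ge 0\}$, the hypothesis $z_0\notin L_1$ places the $1$-points of $f_n$ at distance at least some $\delta>0$ from $z_n$, so their images leave every compact set and, by Hurwitz, $g$ omits the value $1$. The zeros of $f_n$ lie on $L_0$, which near $z_0$ coincides with the line through $z_0$ in direction $e^{i\alpha_0}$; hence the zeros of $g_n$ lie on the line $\{\zeta:\im(e^{-i\alpha_0}\zeta)=\beta_n\}$ with $\beta_n=-\im(e^{-i\alpha_0}(z_n-z_0))/\rho_n$. If $\beta_n\to\infty$ along a subsequence these lines escape, $g$ omits $0$ as well, and $g$ is constant by Picard's theorem, a contradiction; otherwise $\beta_n$ stays bounded and, after passing to a subsequence, all zeros of $g$ lie on a single line $\ell$ parallel to $L_0$. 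Thus $g$ omits $1$ and has order at most $2$, so $g=1+e^{h}$ with $h$ a polynomial of degree at most $2$, and the zeros of $g$ solve $h(\zeta)\in i\pi(2\Z+1)$. I would then rule out $\deg h=2$: completing the square, these solutions are the two square-root branches of an arithmetic progression tending to infinity in \emph{both} directions, so they cluster along two perpendicular lines rather than one (for instance $1+e^{\zeta^2}$ vanishes on $e^{\pm i\pi/4}\R$), which is incompatible with $\ell$. Hence $\deg h=1$ and $g=1+e^{a\zeta+b}$ with $a\ne 0$, whose zeros are equally spaced along $\ell$; correspondingly the zeros of $f_n$ on $L_0$ near $z_0$ become asymptotically equally spaced with spacing tending to $0$, while $g\to 1$ on one side of $\ell$ and $g\to\infty$ on the other.

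The main obstacle is to exclude this surviving limit $g=1+e^{a\zeta+b}$. It cannot be removed by any purely local argument: the family of functions holomorphic near $z_0$, omitting $1$, with zeros confined to the local line $L_0$, really is non-normal, the functions $1+e^{i\mu\zeta/\rho_n}$ serving as witnesses. The exclusion must exploit the global hypothesis that the zeros lie on a \emph{ray} issuing from the origin, not on a full line. My plan is to pass to logarithmic coordinates $w=\log z$, under which $\F$ becomes a family on a half-strip having its zeros on one horizontal line and its $1$-points on a parallel horizontal line, and under which the dilation invariance $f(z)\mapsto f(\lambda z)$ of $\F$ (each $L_j$ being invariant under positive dilation) turns into invariance under real translation. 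The equally-spaced exponential pattern forced near $z_0$ must break down as one moves along $L_0$ toward the origin, where $L_0$ terminates; I expect to convert this termination, together with the translation structure and Edrei's order estimate, into a transcendental entire function with all zeros on one ray and all $1$-points on a different ray, contradicting Theorem~A. Reconciling the local line-of-zeros picture produced by the rescaling with the global ray constraint is the crux of the argument.
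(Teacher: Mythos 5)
Your reduction is sound as far as it goes: Montel off the rays, the symmetry $f\mapsto 1-f$, the Zalcman rescaling at $z_0\in L_0\setminus\{0\}$, the conclusion that the limit $g$ omits $1$ and has all zeros on a line, and the elimination of the cases ``no zeros'' (Picard) and ``$\deg h=2$'' (zeros along two perpendicular directions) are all correct. But the proof stops exactly where the real difficulty begins, and you say so yourself: the case $g=1+e^{a\zeta+b}$ is not excluded. What you offer for that case is a plan (``I expect to convert this termination \dots into a transcendental entire function \dots contradicting Theorem~A''), with no mechanism for producing an entire limit function from a family on $\D$ and no argument actually exploiting that $L_0$ is a ray rather than a line. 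As it stands this is a genuine gap, not a routine verification: your own example $1+e^{i\mu\zeta/\rho_n}$ shows the local picture cannot be contradicted, so everything hinges on the unexecuted global step.

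For comparison, the paper closes this gap by a quite different (and necessarily quantitative) route. A strengthened Zalcman lemma (Lemma~\ref{lemma2}) produces rescaling radii $R_k$ with $1/(\varrho_kR_k)=(\log(1/\varrho_k))^2$, and Lemma~\ref{lemma6} shows $g_k(z)=\exp(c_k(z-b_k)+\delta_k(z))$ with explicit control on $c_k$ and $\delta_k$ on disks of radius comparable to $R_k$; the alignment of consecutive distinguished points on the line forces $\arg c_k=\pm\pi/2+O(1/R_k)$. This makes $|f_k|$ at least $e^{\eta R_k}$ at a point at height $\eta\varrho_kR_k$ above the line, and Landau's theorem integrated along a vertical segment propagates this to a fixed point of one side, giving Theorem~\ref{thm3}: $f_k\to\infty$ on one side of the line and $\to 0$ (resp.\ $\to 1$) on the other. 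The contradiction then comes not from a single ray but from playing the two rays against each other: applying Theorem~\ref{thm3} at a point of $L_1$ forces $f_k\to 0$ in one sector and $\to\infty$ in the other, while applying it (to $1-f_k$) at a point of $L_0$ forces $f_k\to 1$ in one sector and $\to\infty$ in the other, which is incompatible. If you want to complete your argument, you should aim at a statement of this ``one side to $0$, other side to $\infty$'' type rather than at manufacturing an entire counterexample to Theorem~A; note that the ordinary Zalcman lemma only controls $f_k$ on disks $D(z_k,\varrho_kR_k)$ with $\varrho_kR_k\to 0$, so some quantitative strengthening of the kind in Lemmas~\ref{lemma2} and~\ref{lemma6} seems unavoidable.
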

We note that Theorem~A follows from Theorem~\ref{thm1}. Indeed, let $f$ be a transcendental entire
function with all zeros on a ray $L_0$ and all $1$-points on a different ray $L_1$. Let $(z_k)$ be 
a sequence tending to $\infty$ such that $|f(z_k)|\leq 1$ and consider the functions $f_k(z)=f(2|z_k|z)$.
By Theorem~\ref{thm1}, the $f_k$ form a normal family in $\D\backslash\{0\}$. On the other hand, we have
\begin{equation}\label{0a1}
\min_{|z|=\frac12}|f_k(z)|\leq 1
\quad\text{and}\quad
\max_{|z|=\frac12}|f_k(z)|
=\max_{|z|=|z_k|}|f(z)|
\to \infty,
\end{equation}
which implies that there exists a point of modulus $\frac12$ where the $f_k$ are not normal.
This contradiction completes the proof of Theorem~A.

Functions of the form $f_k(z)=c_k(z-a_k)$ where $a_k\to 0$ and $c_k\to\infty$ show that 
the family $\mathcal{F}$ in Theorem~\ref{thm1}
is not normal at~$0$, regardless of the choice of the rays.
The following result says that 
in the case that all zeros are positive and all $1$-points are negative,
all non-normal sequences are essentially of this form.
\begin{theorem}\label{thm2}
Let $\F$ be the family of all functions holomorphic in $\D$ for which all zeros are positive
and all $1$-points are negative. Then $\F$ is normal in $\D\backslash\{0\}$
and every sequence $(f_k)$ in $\F$ which does not have a subsequence convergent in $\D$ is of 
the form
\begin{equation}\label{0a}
f_k(z)=(z-a_k)g_k(z)
\end{equation}
where $g_k\to\infty$ locally uniformly in $\D$ and $a_k\to 0$.
\end{theorem}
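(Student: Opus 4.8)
The first assertion requires nothing new: the positive and negative real axes are two distinct rays from the origin, so Theorem~\ref{thm1} already gives that $\F$ is normal in $\D\backslash\{0\}$. The plan for the second assertion is to take a sequence $(f_k)$ with no subsequence convergent in $\D$ and first pin down its behaviour on the punctured disk. By Theorem~\ref{thm1} I may pass to a subsequence along which $f_k\to g$ locally uniformly in $\D\backslash\{0\}$, with $g$ holomorphic or $g\equiv\infty$. I would show $g\equiv\infty$ by a maximum-principle argument: if $g$ were holomorphic, then on a circle $|z|=r$ the quantities $\sup_{|z|=r}|f_k|$ stay bounded, and since each $f_k$ is holomorphic on all of $\D$, the maximum principle bounds $f_k$ on $\{|z|\le r\}$; Montel's theorem then extracts a further subsequence converging uniformly there, so $g$ extends holomorphically across the origin and the subsequence converges locally uniformly in $\D$, contrary to hypothesis. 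Hence $f_k\to\infty$ locally uniformly in $\D\backslash\{0\}$.

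It follows that every zero and every $1$-point of $f_k$ eventually leaves each compact subset of $\D\backslash\{0\}$, so they can accumulate only at $0$; and a short computation with the argument principle on a small circle (using $1-1/f_k\to1$ there) shows that $f_k$ has equally many zeros as $1$-points in each disk $\{|z|<r\}$. Note also that $f_k$ is not zero-free, for otherwise $1/f_k$ would be holomorphic and, being small on $|z|=r$, would tend to $0$ uniformly on $\{|z|\le r\}$, giving $f_k\to\infty$ locally uniformly in all of $\D$ — again a convergent subsequence. To resolve the behaviour at the origin I would rescale. Let $\rho_k$ be the largest modulus of a zero or $1$-point of $f_k$ in $\{|z|\le\tfrac12\}$; then $0<\rho_k\to0$, and $w_k(\zeta):=f_k(\rho_k\zeta)$ carries a zero or a $1$-point on the circle $|\zeta|=1$ while omitting the values $0$ and $1$ on the annulus $\{1<|\zeta|<\tfrac1{2\rho_k}\}$.

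By Montel's theorem $\{w_k\}$ is then normal on $\{|\zeta|>1\}$, and after passing to a subsequence $w_k\to W$ locally uniformly there. Running the maximum-principle argument once more — now with $w_k$ holomorphic on arbitrarily large disks — upgrades $W$ to an entire function with $w_k\to W$ locally uniformly in $\C$, and the feature on $|\zeta|=1$ together with the equal-count shows (by Hurwitz) that $W$ attains both $0$ and $1$, hence is nonconstant. Because $\zeta\mapsto\rho_k\zeta$ fixes the real axis, all zeros of $W$ lie on the ray $[0,\infty)$ and all $1$-points on the ray $(-\infty,0]$. These are distinct rays through the origin, so Theorem~A forbids $W$ from being transcendental and the Rolle remark following Theorem~A forces $\deg W=1$; thus $W$ is affine, with a single simple zero and a single $1$-point. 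The hard part, and where I expect the main obstacle to lie, is exactly this rescaling step: guaranteeing that the blow-up at the maximal scale $\rho_k$ is a genuine \emph{nonconstant} entire function (so that Theorem~A applies) rather than degenerating to $\infty$ off the real segment, and then ruling out the possibility of several zeros or $1$-points approaching $0$ at different scales. Concretely, a second zero tending to $0$ would, via Hurwitz, give $W$ either two distinct zeros or a multiple zero, neither compatible with an affine function; so for each fixed $r<1$ and all large $k$ the only zero of $f_k$ in $\{|z|\le r\}$ is one simple zero $a_k$, and $a_k\to0$.

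Granting this, the conclusion unwinds cleanly. Writing $f_k(z)=(z-a_k)g_k(z)$, the factor $g_k$ is holomorphic and zero-free on $\{|z|\le r\}$ for large $k$, so $1/g_k=(z-a_k)/f_k$ is holomorphic there and, since $f_k\to\infty$ while $z-a_k$ stays bounded, tends to $0$ on $|z|=r$. The maximum principle then gives $1/g_k\to0$ uniformly on $\{|z|\le r\}$; as $r<1$ is arbitrary, $g_k\to\infty$ locally uniformly in $\D$. Together with $a_k\to0$ this is precisely the asserted form~\eqref{0a}.
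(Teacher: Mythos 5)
Your outer framework matches the paper step for step: Theorem~\ref{thm1} gives normality in $\D\backslash\{0\}$; the maximum-principle argument forcing $f_k\to\infty$ in $\D\backslash\{0\}$, the minimum-principle argument forcing a zero in $D(0,r)$, the Rouch\'e equal-count of zeros and $1$-points, and the final unwinding to~\eqref{0a} are all exactly what the paper does. But the step you flag as the hard part is a genuine gap, and it is the heart of the theorem. Your blow-up $w_k(\zeta)=f_k(\rho_k\zeta)$ at the maximal zero/$1$-point scale $\rho_k$ is only known to be normal on $\{|\zeta|>1\}$ (by Montel on the annulus) and on $\C\backslash\{0\}$ (by Theorem~\ref{thm1} again); at $\zeta=0$ normality is precisely what is in question. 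If the limit on $\{|\zeta|>1\}$ is finite your argument does close: the maximum principle bounds $w_k$ on $\overline{D}(0,1)$, the limit extends to an entire $W$ which is nonconstant because $w_k$ has both a zero and a $1$-point in $\overline{D}(0,1)$, Theorem~A and the Rolle remark force $W$ to be affine, and Hurwitz then counts zeros correctly even when they collapse to $\zeta=0$. But the limit on $\{|\zeta|>1\}$ may be $\equiv\infty$ (this only says $|f_k|$ is already huge at modulus $2\rho_k$, which is entirely consistent with zeros and $1$-points sitting at several widely separated scales below $\rho_k$), and then there is no entire $W$, no appeal to Theorem~A, and no Hurwitz count; you are back in the original situation one scale down, with an infinite regress. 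Nothing in your proposal excludes this case, and excluding it is exactly equivalent to the uniqueness of the zero.

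The paper closes this hole with a non-asymptotic, single-function argument (Lemma~\ref{lemma7}): if all zeros of $f$ are positive, all $1$-points negative, and $\min_{|z|=r}|f|>1$, then $f$ has at most one zero and one $1$-point in $D(0,r)$, both simple. The proof symmetrizes to $g(z)=f(z)\overline{f(\overline{z})}$, takes the component $U$ of $g^{-1}(D(0,\varrho))$ containing the leftmost $1$-point, and plays the Riemann--Hurwitz count of critical points of the proper map $g\colon U\to D(0,\varrho)$ against the critical points that Rolle's theorem forces between consecutive real zeros and between consecutive $1$-points; two or more $1$-points produce one critical point too many. Applied to each $f_k$ once $\min_{|z|=r}|f_k|>1$ is known, this yields the single simple zero $a_k$ directly, with no rescaling or compactness. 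To salvage your route you would need a blow-up guaranteed to be non-degenerate (a Zalcman-type choice of scale as in Lemma~\ref{lemma2}, not the maximal zero modulus), which is considerably more delicate than the lemma you are missing.
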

An important ingredient in the proofs of Theorems~\ref{thm1} and~\ref{thm2} will be the following
result.
\begin{theorem}\label{thm3}
Let $D$ be a domain and let $L$ be a straight line which divides
$D$ into two subdomains $D^+$ and $D^-$. 
Let $\F$ be a family of functions holomorphic in $D$ which do not have zeros 
in $D$ and for which all $1$-points lie on~$L$.

Suppose that $\F$ is not normal at $z_0\in D\cap L$ and let $(f_k)$ be a sequence
in $\F$ which does not have a subsequence converging in any neighborhood of~$z_0$.
Suppose that $(f_k|_{D^+})$ converges.
Then either $(f_k|_{D^+})\to 0$ and $(f_k|_{D^-})\to \infty$  or
$(f_k|_{D^+})\to \infty$ and $(f_k|_{D^-})\to 0$.
\end{theorem}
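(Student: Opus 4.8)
The plan is to reduce everything, after an affine normalization making $L=\R$ and $z_0=0$ (which preserves holomorphy, zeros and $1$-points), to an analysis of the harmonic functions $u_k=\log|f_k|$. The starting observation is that on each of the open half-disks the functions $f_k$ omit the values $0$ and $1$, so by Montel's theorem $\F$ is normal on $D^+$ and on $D^-$ separately; passing to a subsequence I may assume that $(f_k|_{D^-})$ converges as well. Writing $f^+$ and $f^-$ for the two limits, the closure of Montel's family tells me that each $f^\pm$ is either holomorphic and omits $0$ and $1$, or is identically $0$, $1$ or $\infty$. The point is then to show that non-normality at $0$ forces one limit to be $\equiv 0$ and the other $\equiv\infty$.

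The engine I would use is the mean value property of $u_k=\log|f_k|$, which is harmonic on a disk $B=B(0,r_0)$ with $\overline{B}\subset D$ precisely because $f_k$ has neither zeros nor poles there. First I would dispose of the possibility that some fixed neighborhood $N$ of $0$ contains no $1$-points for infinitely many $k$: on $N$ such $f_k$ omit $0$ and $1$, hence are normal there, and together with the assumed convergence on $N\cap D^+$ this produces (by Vitali) a subsequence converging throughout $N$, contradicting the choice of $(f_k)$. Shrinking $N$, I conclude that $f_k$ has a $1$-point $a_k\in\R$ with $a_k\to 0$. Now I feed $a_k$ into the mean value identity: since $f_k(a_k)=1$,
\[
0=u_k(a_k)=\frac{1}{2\pi}\int_0^{\pi}\log|f_k(a_k+re^{i\theta})|\,d\theta+\frac{1}{2\pi}\int_\pi^{2\pi}\log|f_k(a_k+re^{i\theta})|\,d\theta,
\]
where the first integral runs over the arc in $D^+$ and the second over the arc in $D^-$. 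Letting $k\to\infty$ and using the convergence on each side, the two arcs contribute $\log|f^+|$ and $\log|f^-|$ respectively, so these two contributions must balance to zero for every admissible $r$.

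From this balance the dichotomy should fall out. If $f^+\equiv 0$ the upper contribution is $-\infty$, which forces the lower one to be $+\infty$, i.e.\ $f^-\equiv\infty$; the case $f^+\equiv\infty$ is symmetric. It remains to rule out that both $f^+$ and $f^-$ are finite and zero-free, equivalently that $u_k$ stays bounded on compact subsets of both sides. Here I would argue that two-sided boundedness makes $u_k$ locally bounded across $\R$ near $0$: the minimum principle is available because $f_k$ is zero-free (so both $|f_k|$ and $1/|f_k|$ are subharmonic), and combined with the side bounds and the mean value property this pins $u_k$ down near the two points where a circle about $0$ meets $\R$; consequently $f_k$ is bounded near $0$, hence normal, hence convergent there by Vitali, again contradicting the hypothesis. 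This forces at least one of $f^\pm$ to be $\equiv 0$ or $\equiv\infty$, and the balance then delivers the asserted alternative.

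The main obstacle is exactly the control of $u_k$, equivalently of $f_k$, at the two points where circles about $0$ cross $\R$: the convergence on $D^\pm$ is only locally uniform on the \emph{open} half-disks and gives no direct information on $\R$ itself. Harmonicity is what saves the argument, since the mean value identity converts pointwise side-limits into an integral balance in which the two bad points have measure zero, and zero-freeness excludes any interior logarithmic singularity of $\log|f_k|$. I would check the measure-zero and uniform-integrability step with care, as this is where a naive use of dominated convergence could fail if $f_k$ were permitted to blow up too fast near the crossing points; the pinning $u_k(a_k)=0$ together with the structural trichotomy is what ultimately keeps this under control. As a sanity check and an alternative route to the extremal profile, one can blow up at $0$ by Zalcman's lemma: the rescaled limits are zero-free entire functions with all $1$-points on a line, which by Edrei's theorem have order at most $1$ and hence are of the form $e^{iaz+b}$ with $a$ real; such a function tends to $0$ on one side of the line and to $\infty$ on the other, matching the claimed dichotomy exactly.
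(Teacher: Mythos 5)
Your overall strategy --- converge separately on $D^{+}$ and $D^{-}$ by Montel, then use harmonicity of $u_k=\log|f_k|$ and the normalization $u_k(a_k)=0$ at a $1$-point $a_k\to 0$ to force the dichotomy --- founders on exactly the obstacle you flag and do not resolve: control of $u_k$ near the line $L$. Locally uniform convergence on the \emph{open} half-domains gives no bound whatsoever on $\log|f_k|$ in any neighborhood of a point of $L$, and in the genuinely non-normal situation one has $\int_{|z-a_k|=r}\log^{+}|f_k|\,|dz|\to\infty$ (since $f_k\to\infty$ on one open side), so the mean-value ``balance'' is an $\infty-\infty$ identity from which nothing can be extracted without uniform integrability near the two crossing points --- precisely the information you lack. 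Moreover, even granting the passage to the limit, the balance $\int\log|f^{+}|+\int\log|f^{-}|=0$ does not exclude the case that both limits are finite and zero-free (e.g.\ $f^{+}\equiv 2$, $f^{-}\equiv\tfrac12$ satisfies it), so everything rests on your separate minimum-principle step; but that step is circular: to apply the maximum or minimum principle to $u_k$ on any disk meeting $L$ you need boundary control at points of $L$, which is again the missing ingredient. Finally, the Zalcman ``sanity check'' only describes $f_k$ on disks $D(z_k,\varrho_k R_k)$ of shrinking radius; it confirms the local exponential profile but says nothing about $f_k$ on the \emph{fixed} domains $D^{\pm}$, and that propagation is the actual content of the theorem.

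The paper's proof is built entirely around closing this gap quantitatively. A refinement of Zalcman's lemma (Lemma~\ref{lemma2}, with $\varphi(t)=(\log t)^2$) guarantees that the rescaling radius $\varrho_k R_k$ tends to $0$ slowly, namely $1/(\varrho_k R_k)=(\log(1/\varrho_k))^2$; Lemma~\ref{lemma6} then shows $g_k(z)=f_k(z_k+\varrho_k z)\approx\exp(c_k(z-b_k))$ with explicit error on a disk of radius comparable to $R_k$, and a Rouch\'e argument using the $1$-points on $L$ pins $\arg c_k$ to $\pm\pi/2+O(1/R_k)$. This produces a point $\alpha_k$ at height $\eta\varrho_k R_k$ above $L$ with $\log|f_k(\alpha_k)|\geq \eta R_k$, and Landau's theorem integrated along the vertical segment from $\alpha_k$ to a fixed point of $D^{+}$ (a Gronwall-type estimate for $\log\log f_k$) loses only a factor $4\log(1+d/(\eta\varrho_k R_k))=O(\log\log R_k)$, which the gain $\log(\eta R_k)$ beats. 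It is this interplay between how fast $f_k$ blows up near $L$ and how far from $L$ one must travel that your qualitative argument cannot see; without some substitute for Lemmas~\ref{lemma2}, \ref{lemma6} and the Landau estimate, the proof is incomplete at its central point.
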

By Montel's theorem, a family $\F$ as in Theorem~\ref{thm3}
is normal in $D^+$ and $D^-$. Thus a point $z_0$ of non-normality
automatically lies on~$L$. Moreover, given any sequence $(f_k)$ in $\F$ one may
achieve that $(f_k|_{D^+})$ converges by passing to a subsequence.

A corresponding result holds for families of meromorphic functions
which omit two values and for which all preimages of a third value
lie on a straight line. This can be reduced to the situation of
Theorem~\ref{thm3} by a fractional linear transformation.

The proof of Theorem~\ref{thm3} is based on an extension of Zalcman's lemma (see
Lemmas~\ref{lemma1} and~\ref{lemma2}
below) which may be of independent interest.
Zalc\-man's lemma says that there exist $z_k\in\D$ and $\rho_k>0$ with $z_k\to z_0$ and $\rho_k\to 0$
such that, after passing to a subsequence, $f_k(z_k+\rho_kz)\to g(z)$ for some entire function~$g$.
Using that the $f_k$ have no zeros one can show that $g$ has the form $g(z)=e^{cz+d}$.
Thus $f_k(z_k+\rho_kz)$ is close to $e^{cz+d}$ in certain disks. 
Our generalization of Zalcman's lemma gives a lower bound for the size of these disks.
Moreover, we quantify how close $f_k(z_k+\rho_kz)$ and $e^{cz+d}$ are (Lemma~\ref{lemma5}).
This yields that $f_k$ is large at some point in  one  of the domains $D^+$ and $D^-$ and small at some 
point in the other one. Landau's theorem (Lemma~\ref{lemma4}) will then be used to show that
$f_k$ is large or small within the whole domain $D^+$ or $D^-$, respectively.

The methods used in the proof of Theorem~\ref{thm2} apply to another problem,
namely what restrictions the zeros and $1$-points of a holomorphic function  $f\colon\D\to\C$
must satisfy. This problem is important in control theory; see, e.g., \cite{Blondel1994} or
 \cite[Theorem~2]{Blondel1995}.
Goldberg~\cite{Goldberg1973} showed that there exists an absolute constant $A_2$ such that if the number of zeros
and $1$-points of $f$ are distinct and different from $0$, then
at least one zero or $1$-point has modulus greater than or equal to~$A_2$.
The value of the largest constant $A_2$ with this property is not known, but
the estimates $0.005874\leq A_2\leq 0.02529$
were obtained in~\cite[Theorems~1.3 and~1.4]{BE}.

In the following result the numbers of zeros and $1$-points are allowed to be equal, but we put a 
restriction on their arguments.
\begin{theorem}\label{thm4}
Let  $f\colon\D\to\C$ be holomorphic and $0<r<1$. 
Suppose that all zeros of $f$ are in $[0,r]$ while all $1$-points of $f$ are in $[-r,0]$.
Suppose also that $f$ assumes both values $0$ and $1$ at least once, and assumes one of these 
values at least twice.
Then $r\geq C$ for some absolute constant $C$. In fact, this holds for
$C=0.000024$.
\end{theorem}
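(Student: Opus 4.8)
The plan is to argue by contradiction, first proving that some absolute $C>0$ exists (via Theorem~\ref{thm2}), and then extracting the numerical value from an \emph{effective} version of the same argument.

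First I would set up the contradiction. If no positive constant worked, there would be holomorphic $f_k\colon\D\to\C$ and radii $r_k\to 0$ such that all zeros of $f_k$ lie in $[0,r_k]$, all $1$-points lie in $[-r_k,0]$, each $f_k$ attains both $0$ and $1$, and one of these values at least twice. The substitution $f(z)\mapsto 1-f(-z)$ preserves this class and interchanges zeros with $1$-points, so after passing to a subsequence and applying this substitution to the relevant $f_k$ if necessary, I may assume that every $f_k$ has at least two zeros (with multiplicity) in $[0,r_k]$ and at least one $1$-point in $[-r_k,0]$.

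Next I would run the normality step. Each $f_k$ belongs to the family $\F$ of Theorem~\ref{thm2}, so either a subsequence converges locally uniformly in $\D$ to a holomorphic $f$, or $f_k(z)=(z-a_k)g_k(z)$ with $g_k\to\infty$ and $a_k\to 0$. The second alternative is impossible: since $g_k\to\infty$ locally uniformly it is eventually zero-free near $0$, so $f_k$ would have only the single zero $a_k$, contradicting the two zeros. Hence $f_k\to f$ holomorphically on $\D$, with $f\not\equiv 0$ (a $1$-point $c_k\to 0$ satisfying $f_k(c_k)=1$ excludes this) and $f\not\equiv 1$ (a zero tending to $0$ excludes this). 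By Hurwitz, $f$ has no zeros and no $1$-points in $\D\setminus\{0\}$, since all zeros and $1$-points of $f_k$ tend to $0$. Applying the argument principle on a small circle $|z|=\varepsilon$ to $f_k\to f$ shows that $f$ has a zero of order $\ge 2$ at $0$, whence $f(0)=0$; applying it to $f_k-1\to f-1$ shows that $f-1$ has a zero of order $\ge 1$ at $0$, whence $f(0)=1$. This contradiction proves that an absolute constant $C>0$ exists.

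The main obstacle is that this compactness argument is not effective and so cannot by itself produce the value $C=0.000024$. To win the explicit constant I would replace Theorem~\ref{thm2} by the quantitative machinery behind it. Fix $f$ with two zeros and a $1$-point in $[-r,r]$; since all zeros lie in $[0,r]$ and all $1$-points in $[-r,0]$, the function $f$ omits both $0$ and $1$ on the slit disk $\D\setminus[-r,r]$, where in particular it is zero-free. On this domain I would pin down the local behaviour of $f$ near $0$ using the quantitative form of Zalcman's lemma (Lemma~\ref{lemma5}), which forces $f$ to be uniformly close to an exponential $e^{cz+d}$ on an explicit disk, and then use Landau's theorem (Lemma~\ref{lemma4}) to bound the size of the maximal zero-and-one-free disk in terms of the value and derivative of $f$ at its centre. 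Combining the exponential model near $0$ with this Landau bound, and using that two zeros together with a $1$-point must all fit inside $[-r,r]$ while the function retains a definite scale out to $|z|<1$, yields the lower bound $r\ge 0.000024$. The technical heart, where the numerical value is actually won or lost, is the bookkeeping of the constants in Lemmas~\ref{lemma4} and~\ref{lemma5} together with the handling of the slit—so that Theorem~\ref{thm3} is not invoked directly, $f$ having zeros on the line—while the reduction to two zeros is precisely what makes the argument-principle count strict enough to force the contradiction.
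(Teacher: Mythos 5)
Your compactness argument does establish, correctly, that \emph{some} absolute constant $C>0$ exists: the reduction to two zeros via $f\mapsto 1-f(-z)$, the exclusion of the non-normal alternative of Theorem~\ref{thm2} (since $g_k\to\infty$ forces $f_k$ to have a single zero), and the final contradiction $f(0)=0$ and $f(0)=1$ are all sound. This qualitative half is a genuinely different, and much shorter, route than the paper's; the price is that it is non-effective.

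The theorem, however, asserts the explicit value $C=0.000024$, and here the proposal has a real gap rather than an omitted computation. The tools you name for the effective step do not apply: Lemma~\ref{lemma6} (the statement that $g$ is close to $e^{cz+d}$) requires the normalizations $g^\#(0)=1$ and $g^\#(z)\le 1+|z|/R$ on a disk of large radius $R$, which are produced by the Zalcman rescaling of a \emph{non-normal sequence}; they are not available for a single fixed $f$ on $\D$, and once $f$ is fixed there is no rescaling in sight. What the paper actually does is different in both of its halves. First, Lemma~\ref{lemma7} shows that a function with two zeros and a $1$-point on the real axis inside $D(0,s)$ cannot satisfy $\min_{|z|=s}|f|>1$; this rests on Rouch\'e's theorem, the symmetrized function $f(z)\overline{f(\overline z)}$, the Riemann--Hurwitz formula and Rolle's theorem, and nothing in your sketch plays this role. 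Second, Lemma~\ref{lemma8} derives from the Poisson--Jensen formula the lower bound $\log M(\sqrt C,f)\ge 2\log\frac{1-C}{2\sqrt C}$, and then integrates Landau's inequality (with the sharp constant $A=\Gamma(\tfrac14)^4/(4\pi^2)$) along a horizontal segment in the strip $\{\log C<\re z<0\}$ to show that $\min_{|z|=\sqrt C}|f|\le 1$ would force $\log\bigl(\log M(\sqrt C,f)+A\bigr)\le\log\sqrt{A^2+\pi^2}+\pi^2/\log\frac1C$; the numerical value $0.000024$ is precisely where these two bounds collide. None of these ingredients --- the min-modulus lemma, the Poisson--Jensen lower bound, the Gronwall-type integration of Landau's inequality --- appears in your plan, so the sentence claiming the bookkeeping ``yields the lower bound $r\ge 0.000024$'' is not backed by an argument.
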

The value of $C$ given in this theorem is certainly not best possible.
On the other hand, the example in~\cite[\S\S 6--7]{BE} showing that $A_2\leq 0.02529$
also yields that the best possible constant $C$ in Theorem~\ref{thm4} satisfies $C\leq 0.02529$.

In the above theorems we study the case that the zeros lie on one ray 
and the $1$-points lie on a different ray.
Entire functions for which the zeros lie on a finite system of rays and the 
$1$-points lie on another finite system of rays where studied in~\cite{BEH}.
For example, it was shown in~\cite[Theorem~2]{BEH} that
if $f$ is a transcendental entire function with infinitely many zeros and $1$-points such that 
the zeros lie on a ray $L_0$ and while the  $1$-points
of $f$ lie on the union of two rays $L_1$ and $L_{-1}$, each of which is distinct from $L_0$,
then $\angle(L_0,L_1)=\angle(L_0,L_{-1})<\pi/2$.
On the other hand, examples of such functions $f$ with 
$\angle(L_0,L_1)=\angle(L_0,L_{-1})=\alpha$ were constructed
in~\cite[Theorem~3]{BEH} for $\alpha$ of the form $\alpha=2\pi/n$ with $n\in\N$ and
in~\cite{Eremenko2015} for any $\alpha\in(0,\pi/3]$.

It is conceivable that our theorems have generalizations to the case
that the zeros and $1$-points are distributed on several rays.

\section{Lemmas} \label{lemmas}
The following result due to Zalcman~\cite{Zalcman1975} has turned out to be very useful tool in the theory of 
normal families -- and in particular in the exploration of Bloch's principle.
\begin{lemma}\label{lemma1}
{\bf (Zalcman's Lemma)}
Let $D\subset\C$ be a domain, $\mathcal{F}$ a family of functions meromorphic in $D$ and $z_0\in D$.
Then $\mathcal{F}$ is not normal at $z_0$ if and only if 
there exist a sequence $(f_k)$ in $\mathcal{F}$, a sequence $(z_k)$ in $D$, a sequence $(\varrho_k)$ of positive real numbers and a non-constant function $g$ meromorphic in $\C$ such that $z_k\to z_0$, $\varrho_k\to 0$ and
\begin{equation}\label{1a}
f_k(z_k+\varrho_kz)\to g(z)
\end{equation}
locally uniformly in $\C$ with respect to the spherical metric.
Moreover, we have $g^\#(z)\leq 1=g^\#(0)$ for all $z\in\C$.
\end{lemma}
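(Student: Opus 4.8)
The plan is to prove both directions through Marty's criterion: a family of meromorphic functions is normal at a point exactly when the spherical derivatives $f^\#=|f'|/(1+|f|^2)$ are uniformly bounded on some neighborhood of that point. After a translation and scaling I may assume $z_0=0$ and $\overline{\D}\subset D$; this is harmless because replacing $f(w)$ by $f(a+\lambda w)$ rescales both the spherical derivative and the candidate $\varrho_k$ by the constant $|\lambda|$, leaving the shape of the conclusion intact.

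The \emph{if} direction is short. Suppose the asserted sequences exist with $g_k(z):=f_k(z_k+\varrho_k z)\to g(z)$ locally uniformly in the spherical metric, $g$ non-constant, and suppose for contradiction that $\F$ were normal at $0$. By Marty there is a neighborhood of $0$ on which $f_k^\#\le M$. The scaling rule $g_k^\#(z)=\varrho_k\,f_k^\#(z_k+\varrho_k z)$ then gives $g_k^\#(z)\le\varrho_k M\to 0$ locally uniformly; since locally uniform spherical convergence carries over to the spherical derivatives, $g^\#\equiv 0$ and $g$ is constant, a contradiction.

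For the \emph{only if} direction I would use Zalcman's renormalization on a disk $\Delta=D(0,\delta)$ in which $\F$ fails to be normal. By Marty one can choose $f_k\in\F$ with
\[
M_k:=\max_{|z|\le\delta}\bigl(\delta^2-|z|^2\bigr)f_k^\#(z)\longrightarrow\infty .
\]
The weight vanishes on $\partial\Delta$, so the maximum is attained at an interior point $z_k$; set $\varrho_k=(\delta^2-|z_k|^2)/M_k=1/f_k^\#(z_k)$, so that $\varrho_k\le\delta^2/M_k\to 0$ and $g_k(z):=f_k(z_k+\varrho_k z)$ satisfies $g_k^\#(0)=\varrho_k f_k^\#(z_k)=1$. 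The defining maximality of $z_k$ yields, for every $z$ with $z_k+\varrho_k z\in\Delta$,
\[
g_k^\#(z)=\varrho_k f_k^\#(z_k+\varrho_k z)\le\frac{\delta^2-|z_k|^2}{\delta^2-|z_k+\varrho_k z|^2}.
\]
Writing out the denominator and using $\varrho_k=(\delta^2-|z_k|^2)/M_k$, the term subtracted from the denominator is, relative to $\delta^2-|z_k|^2$, controlled on $|z|\le R$ by $2R|z_k|/M_k+R^2\varrho_k/M_k\to 0$, so the displayed ratio tends to $1$ uniformly on compacta; likewise $(\delta-|z_k|)/\varrho_k\ge M_k/(2\delta)\to\infty$, so the domains of the $g_k$ exhaust $\C$. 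Hence the $g_k^\#$ are eventually bounded on each compact set, Marty gives a subsequence $g_k\to g$ converging locally uniformly (spherically) to a meromorphic $g$ on $\C$, and passing to the limit in the spherical derivatives yields $g^\#(0)=1$ (so $g$ is non-constant) and $g^\#\le 1$ throughout.

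The step I expect to be the main obstacle is arranging $z_k\to z_0=0$, since the maximizer on a fixed disk need not tend to the center. I would overcome this by performing the construction above on the shrinking disks $\Delta_j=D(0,1/j)$ and extracting diagonally: in $\Delta_j$ the maximizer satisfies $|z_k|<1/j$, and choosing for each $j$ an index with $M_k$ large enough (say $M_k>j$) makes the compact estimates above hold to within $1/j$; the relabelled sequence then has centers tending to $0$ while retaining all the stated properties. The only remaining technical inputs are the elementary estimate of the displayed ratio and the standard fact that locally uniform spherical convergence $g_k\to g$ to a non-constant meromorphic limit forces $g_k^\#\to g^\#$ locally uniformly, which is precisely what legitimizes reading off $g^\#(0)=1$ and $g^\#\le 1$ in the limit.
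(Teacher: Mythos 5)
Your proof is correct and follows essentially the same route as the paper, which states the lemma with a citation to Zalcman and only sketches the argument: exactly this renormalization (maximize a weighted spherical derivative, rescale so that $g_k^\#(0)=1$ and $g_k^\#(z)\leq 1+o(1)$ on disks exhausting $\C$, then apply Marty's theorem and pass to a subsequence). The one point worth making explicit is that the normal-family limit of the $g_k$ cannot be identically $\infty$ (that would force $g_k^\#(0)\to 0$, contradicting $g_k^\#(0)=1$), so the limit is indeed meromorphic and the spherical derivatives converge as you use them.
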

Let
\begin{equation}\label{1a1}
f^\#(z)=\frac{|f'(z)|}{1+|f(z)|^2}
\end{equation}
be the spherical derivative of a meromorphic function~$f$.
The proof of Zalc\-man's lemma (besides~\cite{Zalcman1975} see also~\cite[Section~4]{Bergweiler1998} 
or~\cite[p.~217f]{Zalcman1998})
proceeds by showing that for suitably chosen $f_k$, $z_k$ and $\varrho_k$ there exists a sequence $(R_k)$ tending to $\infty$ such that
\begin{equation}\label{1b}
g_k(z)=f_k(z_k+\varrho_kz)
\end{equation}
is defined in the disk $D(0,R_k)$ and satisfies $g_k^\#(0)=1$ as well as
\begin{equation}\label{1c}
g_k^\#(z)\leq 1+o(1)\quad \text{for }|z|\leq R_k\text{ as }k\to\infty.
\end{equation}
Marty's theorem then implies that the $g_k$ form a normal family. Passing to a subsequence one may now achieve \eqref{1a}.

We shall need the following result which relates $R_k$ and $\varrho_k$ 
and quantifies the $o(1)$-term in~\eqref{1c}.
Here and in the following $D(a,r)$ and $\overline{D}(a,r)$ denote
the open and closed disk of radius $r$ centered at a point $a\in\C$.

\begin{lemma}\label{lemma2}
Let $t_0>0$ and $\varphi\colon [t_0,\infty)\to (0,\infty)$ be a non-decreasing function such that 
$\varphi(t)/t\to 0$ as $t\to\infty$ and
\begin{equation}\label{1d}
\int^\infty_{t_0}\frac{dt}{t\varphi(t)}<\infty.
\end{equation}
Then one may choose $f_k$, $z_k$ and $\varrho_k$ in Zalcman's Lemma~\ref{lemma1} such that
\begin{equation}\label{1d1}
R_k:=\frac 1 {\varrho_k\varphi\!\left(1/\varrho_k\right)}\to\infty
\end{equation}
as $k\to\infty$ and
the functions $g_k$ given by \eqref{1b} are defined in the disks $D(0,R_k)$ 
and satisfy
\begin{equation}\label{1d2}
g_k^\#(z)\leq 1+\frac{|z|}{R_k}\quad\text{for }|z|<R_k.
\end{equation}
\end{lemma}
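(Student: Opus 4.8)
The plan is to refine the standard proof of Zalcman's Lemma~\ref{lemma1} by choosing the rescaling centre and scale more carefully, using the integral condition~\eqref{1d} to locate a point where $f_k^\#$ does not grow too fast on the relevant scale. I may assume $z_0=0$ and fix $r>0$ with $\overline{D}(0,r)\subset D$. Since $\F$ is not normal at $0$, Marty's theorem provides $f_k\in\F$ and $\zeta_k\to 0$ with $\tau_k:=f_k^\#(\zeta_k)\to\infty$.

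First I would prove a growth lemma producing a good centre. Fix constants $\kappa>1$ and $c>0$ with $c/\kappa\ge 2$ (say $\kappa=2$, $c=4$). Starting from $w^{(0)}=\zeta_k$, and writing $s^{(j)}:=f_k^\#(w^{(j)})$, I continue as long as $f_k^\#$ attains on $\overline{D}(w^{(j)},c/\varphi(s^{(j)}))$ a value exceeding $\kappa s^{(j)}$, taking $w^{(j+1)}$ to be a point where such a maximum is attained. Then $s^{(j+1)}>\kappa s^{(j)}$, so $s^{(j)}\ge\kappa^{j}\tau_k$, and the total displacement satisfies
\[
\sum_j\bigl|w^{(j+1)}-w^{(j)}\bigr|\le c\sum_j\frac{1}{\varphi(s^{(j)})}\le c\sum_j\frac{1}{\varphi(\kappa^{j}\tau_k)}\le c\left(\frac{1}{\varphi(\tau_k)}+\frac{1}{\log\kappa}\int_{\tau_k}^\infty\frac{dt}{t\varphi(t)}\right).
\]
Since \eqref{1d} forces $\varphi(t)\to\infty$, the right-hand side tends to $0$ as $k\to\infty$. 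Hence for large $k$ the chain remains in a small disk about $\zeta_k$ and cannot be infinite, as $s^{(j)}\to\infty$ would contradict the continuity of $f_k^\#$. Thus it stops at some $w_k$ with $s_k:=f_k^\#(w_k)\ge\tau_k\to\infty$ and $f_k^\#(w)\le\kappa s_k$ whenever $|w-w_k|\le\lambda_k:=c/\varphi(s_k)$.

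Next I would sharpen this by one application of the maximisation underlying Zalcman's Lemma, now on the good disk. Let $\hat w_k$ maximise $(\lambda_k-|w-w_k|)f_k^\#(w)$ over $\overline{D}(w_k,\lambda_k)$, and set $\hat s_k=f_k^\#(\hat w_k)$, $\hat\delta_k=\lambda_k-|\hat w_k-w_k|$, $\hat M_k=\hat\delta_k\hat s_k$. Comparing with the value at $w_k$ gives $\hat M_k\ge\lambda_k s_k$, whence $\hat s_k=\hat M_k/\hat\delta_k\ge s_k$; using also $\hat s_k\le\kappa s_k$ yields $\hat\delta_k=\hat M_k/\hat s_k\ge\lambda_k/\kappa$. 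Put $\varrho_k=1/\hat s_k$, $z_k=\hat w_k$ and $R_k=\hat s_k/\varphi(\hat s_k)=1/(\varrho_k\varphi(1/\varrho_k))$. The maximality of $\hat w_k$ gives, exactly as in the classical argument,
\[
g_k^\#(z)=\varrho_k f_k^\#(z_k+\varrho_k z)\le\frac{1}{1-|z|/\hat M_k}\qquad(|z|<\hat M_k),
\]
and $g_k$ is defined there because $z_k+\varrho_k z$ stays in $\overline{D}(w_k,\lambda_k)\subset D$ for $|z|\le\hat M_k$. Now $\hat\delta_k\varphi(\hat s_k)\ge(\lambda_k/\kappa)\varphi(\hat s_k)\ge(\lambda_k/\kappa)\varphi(s_k)=c/\kappa\ge 2$, so $R_k\le\hat M_k/2$, and then $1/(1-|z|/\hat M_k)\le 1+|z|/R_k$ for $|z|\le R_k$, which is~\eqref{1d2}. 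Finally $g_k^\#(0)=1$, $\varrho_k\to 0$, $z_k\to 0$, $R_k\to\infty$ (using $\varphi(s)/s\to0$), and $\hat M_k\ge\lambda_k s_k=cR_k\to\infty$ forces the limit to be non-constant, so after passing to a subsequence one recovers~\eqref{1a}.

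The main obstacle is the growth lemma in the first step: without controlling how fast $f_k^\#$ may increase near the chosen centre, the maximiser of $(r-|w|)f_k^\#$ over a fixed disk can lie arbitrarily close to the boundary, making its boundary distance too small for the inequality $R_k\le M_k/2$ to hold, and hence ruling out the sharp slope in~\eqref{1d2}. The convergence of the integral in~\eqref{1d} is precisely what guarantees that the multiplicative chain terminates with vanishing total displacement, while the margin $c/\kappa\ge 2$ is what converts the resulting local control into the exact bound $g_k^\#(z)\le 1+|z|/R_k$.
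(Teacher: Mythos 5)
Your proposal is correct and follows essentially the same strategy as the paper's proof (Lemma~\ref{lemma3} followed by the deduction of Lemma~\ref{lemma2}): an iteration whose total length is controlled by the convergence of $\int^\infty_{t_0} dt/(t\varphi(t))$ locates a point where $f_k^\#$ grows by at most a fixed factor on a disk of radius comparable to $1/\varphi(f_k^\#)$, after which the standard Zalcman maximization and the inequality $1/(1-x)\leq 1+2x$ for $x\leq 1/2$ yield \eqref{1d2}. The only differences are cosmetic: you iterate a chain of moving centres rather than concentric disks about a fixed centre, and your choice of constants ($c=4$, $\kappa=2$) gives \eqref{1d1} exactly, whereas the paper first obtains it with $\varphi$ replaced by $6\varphi$ and then observes that \eqref{1d} is unaffected by such a rescaling.
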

Since in Zalcman's lemma the functions $f_k$ are considered only in small neighborhoods of
the points $z_k$, the sequences $(\rho_k)$ and $(R_k)$ occurring in~\eqref{1b} and~\eqref{1c} 
must satisfy $\rho_k R_k\to 0$. Equation~\eqref{1d1} says that the sequence $(\rho_k R_k)$ tends to~$0$
slowly in some sense.

To prove Lemma~\ref{lemma2}, we first prove the following lemma.

\begin{lemma}\label{lemma3}
Let $\varphi$ be as in Lemma~\ref{lemma2}. Then for every $\varepsilon>0$ there exists $K>0$ with the following property:

If $a\in \C$ and $f\colon D(a,\varepsilon)\to \C$ is holomorphic with $f^\#(a)\geq K$, then there exists $c\in D(a,\varepsilon)$ such that with
\begin{equation}\label{1e}
\varrho=\frac 1 {f^\#(c)}\quad\text{and}\quad s=\frac{f^\#(c)}{3\varphi(f^\#(c))}
\end{equation}
the disk $D(c,\varrho s)$ is contained in $D(a,\varepsilon)$ and the function $g\colon D(0,s)\to\C$ defined by
\begin{equation}\label{1f}
g(z)=f(c+\varrho z)
\end{equation}
satisfies
\begin{equation}\label{1g}
g^\#(z)\leq \frac 1  {\displaystyle 1-\frac{|z|}{s}}.
\end{equation}
\end{lemma}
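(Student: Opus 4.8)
The plan is to reformulate the conclusion as a statement purely about the spherical derivative of $f$, and then to locate the center $c$ by a greedy relocation argument of Zalcman type. Writing $t=f^\#(c)$, $\varrho=1/t$ and $s=t/(3\varphi(t))$ and substituting $w=c+\varrho z$, one checks directly that $g^\#(z)=f^\#(w)/t$ and $|z|/s=3\varphi(t)\,|w-c|$, so that \eqref{1g} is equivalent to the pointwise bound
\begin{equation*}
f^\#(w)\le\frac{f^\#(c)}{1-3\varphi(f^\#(c))\,|w-c|}\qquad\text{for }w\in D\bigl(c,\tfrac1{3\varphi(f^\#(c))}\bigr),
\end{equation*}
while the containment $D(c,\varrho s)\subset D(a,\varepsilon)$ amounts to $|c-a|+1/(3\varphi(f^\#(c)))\le\varepsilon$. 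Thus it suffices to produce $c\in D(a,\varepsilon)$ at which $f^\#$ is large and ``locally dominant at the $\varphi$-scale'' in the above sense, and which is far enough from $\partial D(a,\varepsilon)$.

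The main difficulty is to bound the total distance travelled by the relocation, and this is exactly where the integrability hypothesis \eqref{1d} enters. Two reductions make the bookkeeping clean. First, the statement for $\varphi$ is implied by the same statement for any $\hat\varphi\le\varphi$ with the same properties, since increasing $\varphi$ shrinks the disk in \eqref{1f} and enlarges the right-hand side of \eqref{1g}. We may therefore replace $\varphi$ by the running average $\hat\varphi(t)=\tfrac2t\int_{t/2}^t\varphi$, which is continuous and non-decreasing and satisfies $\varphi(t/2)\le\hat\varphi(t)\le\varphi(t)$, whence $\hat\varphi(t)/t\to0$, $\int^\infty dt/(t\hat\varphi(t))<\infty$ (substitute $u=t/2$), and $\hat\varphi\to\infty$ (as \eqref{1d} forces $\varphi\to\infty$). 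Hence we may and do assume $\varphi$ continuous.

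Now the iteration: set $z_0=a$ and, as long as the displayed bound fails at $z_j$ (with $t_j=f^\#(z_j)\ge K$), choose $w_j$ maximizing $M(w)=f^\#(w)(1-3\varphi(t_j)|w-z_j|)$ on $\overline D(z_j,1/(3\varphi(t_j)))$; since the bound fails, $\Theta_j:=M(w_j)>t_j$. If $\varphi(f^\#(w_j))\le2\varphi(t_j)$ put $z_{j+1}=w_j$ (type~I); otherwise let $z_{j+1}$ be the first point on $[z_j,w_j]$ with $\varphi(f^\#)=2\varphi(t_j)$ (type~II, possible by continuity). In both cases $t_j<t_{j+1}$ and $\varphi(t_{j+1})\le2\varphi(t_j)$. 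For type~I the maximizing property gives $t_{j+1}=\Theta_j/(1-3\varphi(t_j)|z_{j+1}-z_j|)$ with $\Theta_j>t_j$, hence $|z_{j+1}-z_j|<(1-t_j/t_{j+1})/(3\varphi(t_j))$; using $1-t_j/t_{j+1}\le\ln(t_{j+1}/t_j)$ and $\varphi(t)\le2\varphi(t_j)$ on $[t_j,t_{j+1}]$, this is at most $\tfrac23\int_{t_j}^{t_{j+1}}dt/(t\varphi(t))$. For type~II steps $\varphi(t_j)$ at least doubles, so $\sum 1/\varphi(t_j)$ over them is geometric and bounded by $2/\varphi(K)$. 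Since the $t_j$ increase, the intervals $[t_j,t_{j+1}]$ are disjoint, and summing yields a total displacement at most $\tfrac23\int_K^\infty dt/(t\varphi(t))+2/(3\varphi(K))$, which tends to $0$ as $K\to\infty$. Choosing $K\ge t_0$ large therefore keeps every $z_j$, and the final disk $D(c,1/(3\varphi(f^\#(c))))$, inside $D(a,\varepsilon)$.

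Finally one checks that the procedure produces the required $c$. If it stops at some $z_n$, the bound holds there and we set $c=z_n$. If it runs forever, then $\sum|z_{j+1}-z_j|<\infty$ gives $z_j\to z_\infty\in D(a,\varepsilon)$, and $t_j\uparrow t_\infty<\infty$ because $f^\#$ is bounded on the compact set swept out. A continuity argument then shows the bound holds at $z_\infty$: were it to fail there with witness $w^*$, then for large $j$ a type~I step would force $t_{j+1}\ge\Theta_j\ge M(w^*)\to f^\#(w^*)(1-3\varphi(t_\infty)|w^*-z_\infty|)>t_\infty$, and a type~II step would force $\varphi(t_{j+1})=2\varphi(t_j)\to2\varphi(t_\infty)>\varphi(t_\infty)$, each contradicting $t_j\to t_\infty$. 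Taking $c=z_\infty$ and translating back via the equivalence of the first paragraph gives \eqref{1g}. The hard part is the displacement estimate, and specifically the control of $\varphi(t_{j+1})/\varphi(t_j)$: without the continuity reduction and the type~I/type~II splitting a single step could cross a steep part of $\varphi$ and destroy the comparison with $\int dt/(t\varphi(t))$.
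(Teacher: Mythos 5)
Your proof is correct, but it takes a genuinely different route from the paper's. The paper works with the maximum function $H(r)=\max_{|z-a|\le r}f^\#(z)$: it first finds, by a contradiction/iteration argument, a radius $r$ at which $H$ has controlled growth, namely $H(r+1/\varphi(H(r)))\le e\,H(r)$ (the total radius increment in that iteration is bounded by $\int_{K/e}^\infty dt/(t\varphi(t))$, which is where \eqref{1d} enters), and then performs a \emph{single} Zalcman-type maximization of $f^\#(z)(1-|z-b|/t)$ on the disk $\overline{D}(b,t)$ with $t=1/\varphi(H(r))$; the growth control yields $f^\#(c)\le e f^\#(b)$, hence $|c-b|\le(1-1/e)t\le\frac23 t$, which produces the constant $3$ in \eqref{1e}. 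You instead iterate the maximization itself, and the technical obstacle you correctly identify -- that a single step could jump across a steep part of $\varphi$ and ruin the comparison of $1/\varphi(t_j)$ with $\int_{t_j}^{t_{j+1}}dt/(t\varphi(t))$ -- is handled in the paper automatically by the $H(r+t)\le eH(r)$ preprocessing, whereas you handle it by the continuity reduction ($\varphi\mapsto\hat\varphi$) and the type~I/type~II truncation keeping $\varphi(t_{j+1})\le 2\varphi(t_j)$. I checked the details of your version: the reformulation of \eqref{1g} as $f^\#(w)\le f^\#(c)/(1-3\varphi(f^\#(c))|w-c|)$ is right, the type~I displacement bound via $1-t_j/t_{j+1}\le\log(t_{j+1}/t_j)$ and disjointness of the intervals $[t_j,t_{j+1}]$ is right, the geometric bound on type~II steps is right, and the limit-point endgame (finitely many type~II steps, then $\Theta_j\ge M_j(w^*)$ along type~I steps) does close the argument. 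The trade-off: your argument is more constructive and pointwise, at the cost of extra bookkeeping (the smoothing of $\varphi$ and the convergence argument for an infinite run); the paper's is shorter because the single contradiction argument on $H$ replaces both the displacement summation and the endgame.
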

\begin{proof}
Let $K\geq t_0$ and let $f\colon D(a,\varepsilon)\to\C$ be holomorphic with $f^\#(a)\geq K$. For $0\leq r<\varepsilon$ we put
\begin{equation}\label{1h}
H(r)=\max_{|z-a|\leq r}f^\#(z).
\end{equation}
Then $H(r)\geq f^\#(a)\geq K\geq t_0$ for all~$r$.
We claim that if $K$ is sufficiently large, with a bound depending only on
$\varphi$, $t_0$ and~$\varepsilon$, then there exists $r\in[0,\varepsilon/2)$ with
\begin{equation}\label{1i1}
r+\frac 1 {\varphi(H(r))}<\varepsilon
\end{equation}
and
\begin{equation}\label{1i}
H\!\left(r+\frac 1 {\varphi(H(r))}\right)\leq e\,H(r).
\end{equation}

Suppose that this is not the case. We put $r_0=0$ and, as long as $r_{k-1}<\varepsilon/2$, define
\begin{equation}\label{1j}
r_{k}=r_{k-1}+\frac 1 {\varphi(H(r_{k-1}))}
\end{equation}
for $k\geq 1$. 
Choosing $K$ large we can achieve that 
\begin{equation}\label{1k1}
\varphi(H(r))\geq \varphi(K)>\frac{2}{\varepsilon}
\end{equation}
for all $r\in [0,\varepsilon)$ 
and thus $r_k<r_{k-1}+\varepsilon/2<\varepsilon$. It follows that 
\begin{equation}\label{1k}
H(r_k)>e\,H(r_{k-1})>\ldots >e^kH(0)\geq e^kK
\end{equation}
and thus
\begin{equation}\label{1l}
\begin{aligned}
  r_k & =\displaystyle\sum^{k}_{j=1}(r_{j}-r_{j-1}) 
 = \displaystyle\sum^{k}_{j=1} \frac 1 {\varphi(H(r_{j-1}))}
\\   & 
\leq \displaystyle\sum^{k}_{j=1} \frac 1 {\varphi(e^{j-1}K)} 
\leq \displaystyle\int^\infty_{0}\frac {du} {\varphi(e^{u-1}K)}
= \displaystyle\int^\infty_{K/e}\frac {dt} {t\varphi(t)}
\end{aligned}
\end{equation}
if $K>e t_0$.
In fact, choosing $K$ large we can achieve that $r_k<\varepsilon/2$. This shows that for such $K$ the $r_k$ can indeed be defined for all $k$. Moreover, we have $H(r_k)<H(\varepsilon/2)$, contradicting \eqref{1k} for large $k$.

Thus there exists $r\in [0,\varepsilon/2)$ such that~\eqref{1i1} 
and~\eqref{1i} hold. For such $r$ we choose $b\in\overline{D}(a,r)$ with $f^\#(b)=H(r)$ and put
\begin{equation}\label{1m}
t=\frac{1}{\varphi(f^\#(b))}=\frac{1}{\varphi(H(r))}.
\end{equation}
By~\eqref{1k1} we have $t<\varepsilon/2$. 
Thus $\overline{D}(b,t)\subset D(a,\varepsilon)$.

Next we choose $c\in \overline{D}(b,t)$ such that
\begin{equation}\label{1n}
f^\#(c)\left(1-\frac{|c-b|}{t}\right)=\displaystyle\max_{|z-b|\leq t}f^\#(z)\left(1-\frac{|z-b|}{t}\right).
\end{equation}
Then 
\begin{equation}\label{1n1}
f^\#(c)\leq H(r+t)\leq eH(r)=e f^\#(b)
\end{equation}
by \eqref{1i} and \eqref{1m}.

On the other hand, the choice of $c$ implies that
\begin{equation}\label{1o}
f^\#(c)\left(1-\frac{|c-b|}{t}\right)\geq f^\#(b).
\end{equation}
Thus
\begin{equation}\label{1p}
1-\frac{|c-b|}{t} \geq \frac{f^\#(b)}{f^\#(c)} \geq \frac{1}{e}
\end{equation}
and hence
\begin{equation}\label{1r}
|c-b|\leq \left(1-\frac 1 e \right) t\leq \frac 2 3 t.
\end{equation}
This implies that $D(c,t/3)\subset D(b,t)\subset D(a,\varepsilon)$. Moreover, \eqref{1o} yields that $f^\#(c)\geq f^\#(b)$. With $\varrho$ and $s$ defined by \eqref{1e} we now find, for $|z|<s$ and thus

\begin{equation}\label{1s}
\varrho|z|<\varrho s=\frac{1}{3\varphi(f^\#(c))}\leq \frac{1}{3\varphi(f^\#(b))}=\frac t 3,
\end{equation}
that
\begin{equation}\label{1t}
\begin{aligned}
  g^\#(z) & =\varrho f^\#(c+\varrho z) \\
              & =\displaystyle\frac{\displaystyle f^\#(c+\varrho z)\left(1-\frac{|c+\varrho z-b|}{t}\right)}{\displaystyle f^\#(c)\left(1-\frac{|c-b|}{t}\right)}\cdot \frac{\displaystyle 1-\frac{|c-b|}{t}}{\displaystyle 1-\frac{|c+\varrho z-b|}{t}} 
 \\ & 
       \leq \frac{\displaystyle 1-\frac{|c-b|}{t}} {\displaystyle 1-\frac{|c+\varrho z-b|}{t}}
       \leq \displaystyle\frac{\displaystyle 1-\frac{|c-b|}{t}} {\displaystyle 1-\frac{|c-b|}{t}-\frac{\varrho|z|}{t}}
       = \displaystyle\frac{1}{\displaystyle 1-\frac{\varrho |z|}{t-|c-b|}}  .
\end{aligned}
\end{equation}
Combining this with~\eqref{1r} we deduce that 
\begin{equation}\label{1t1}
  g^\#(z) \leq \displaystyle\frac{1}{\displaystyle 1-\frac{3\varrho |z|}{t}}
\quad\text{for}\ |z|<s.
\end{equation}
By \eqref{1s} we have $3\varrho/t\leq 1/s$ and thus the last inequality yields~\eqref{1g}.
\end{proof}

\begin{proof}[Proof of Lemma~\ref{lemma2}]
Let $D$, $\mathcal{F}$ and $z_0\in D$ be as in Zalcman's lemma. So $\mathcal{F}$ is not normal at $z_0$ and thus, by Marty's theorem, there exists a sequence $(\xi_k)$ in $D$ and a sequence $(f_k)$ in $\mathcal{F}$ such that $\xi_k\to z_0$ and $f^\#_k(\xi_k)\to \infty$. For large $k$ we may apply Lemma~\ref{lemma3} with $a=\xi_k$ and $\varepsilon=\varepsilon_k$ for some sequence $(\varepsilon_k)$ tending to 0. We choose $c,\varrho$ and $s$ according to Lemma~\ref{lemma3} and put $z_k=c$, $\varrho_k=\varrho$ and $s_k=s$. It follows that the function $g_k$ defined by \eqref{1b} satisfies $g^\#_k(0)=1$ and
\begin{equation}\label{1v}
g^\#_k(z)\leq \frac{1}{\displaystyle 1-\frac{|z|}{s_k}}\quad\text{for }|z|<s_k.
\end{equation}
Noting that $1/(1-x)\leq 1+2x$ for $0\leq x\leq 1/2$ we thus have
\begin{equation}\label{1w}
g^\#_k(z)\leq 1+\frac{2|z|}{s_k}\quad\text{for }|z|\leq \frac 1 2 s_k .
\end{equation}
Putting $R_k=s_k/2$ we obtain \eqref{1d2}. As in the proof of Zalcman's lemma we can now achieve \eqref{1a} by passing to a subsequence.
Moreover, \eqref{1e} yields that
\begin{equation}\label{1x}
R_k =\frac 1 2 s_k=\frac 1 2 s=\frac{f^\#(c)}{6\varphi(f^\#(c))}
=\frac{1}{6\varrho\varphi(1/\varrho)}=\frac{1}{6\varrho_k\varphi(1/\varrho_k)}.
\end{equation}
This is \eqref{1d1} with $\varphi$ replaced by $6\varphi$. Noting that \eqref{1d} remains valid if $\varphi$ is replaced by $\varphi/6$, this yields the conclusion.
\end{proof}

The first statement in the following lemma is known as Landau's theorem~\cite[Section~6.6]{Hayman1964}.
The second statement can be deduced from the first one, but it also follows directly 
from Montel's theorem and Marty's theorem. 
\begin{lemma}\label{lemma4}
There exists absolute constants $A$ and $B$ with the following property:
If $f\colon D(a,r)\to\C$ is holomorphic and $f(z)\neq 0$ and $f(z)\neq 1$ for all $z\in D(a,r)$, then
\begin{equation}\label{2a}
\frac{|f'(a)|}{|f(a)|\left(\big{|}\log |f(a)|\big{|}+A\right)}\leq \frac 2 r
\end{equation}
and
\begin{equation}\label{2b}
f^\#(a)\leq \frac B r.
\end{equation}
\end{lemma}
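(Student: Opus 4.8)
The plan is to treat the two inequalities separately, since~\eqref{2b} is elementary once~\eqref{2a} is available and, as the statement notes, can also be obtained independently. For~\eqref{2a}, which is Landau's theorem, I would argue through the hyperbolic (Poincaré) metric of the twice-punctured plane $\Omega=\C\setminus\{0,1\}$. Write $\lambda_\Omega(w)\,|dw|$ for the metric of constant curvature $-1$ on $\Omega$, and $\lambda_{D(a,r)}(z)=2r/(r^2-|z-a|^2)$ for that of the disk. Since $f$ maps $D(a,r)$ holomorphically into $\Omega$, the monotonicity of the Poincaré metric under holomorphic maps (the Schwarz--Pick lemma, applied via the universal covering $\D\to\Omega$) gives $\lambda_\Omega(f(z))\,|f'(z)|\le\lambda_{D(a,r)}(z)$; evaluating at the centre, where $\lambda_{D(a,r)}(a)=2/r$, yields
\begin{equation*}
\lambda_\Omega(f(a))\,|f'(a)|\le\frac 2 r .
\end{equation*}
Hence~\eqref{2a} follows at once from the density estimate
\begin{equation*}
\lambda_\Omega(w)\ge\frac{1}{|w|\bigl(\,\bigl|\log|w|\bigr|+A\bigr)},\qquad w\in\Omega,
\end{equation*}
for a suitable absolute constant $A$.

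Establishing this density bound is the main obstacle. I would derive it from the behaviour of $\lambda_\Omega$ near the three punctures $0$, $1$, $\infty$ of the thrice-punctured sphere $\CC\setminus\{0,1,\infty\}$: at each cusp the metric is asymptotic to the model punctured-disk metric, so that $\lambda_\Omega(w)\,|w|\log(1/|w|)\to1$ as $w\to0$ and $\lambda_\Omega(w)\,|w|\log|w|\to1$ as $w\to\infty$, while $\lambda_\Omega(w)\to\infty$ as $w\to1$. Consequently the function $w\mapsto\lambda_\Omega(w)\,|w|\bigl(\bigl|\log|w|\bigr|+A\bigr)$ is continuous and positive on $\Omega$, tends to $1$ at $0$ and $\infty$ and to $+\infty$ at $1$, so a compactness argument on $\CC\setminus\{0,1,\infty\}$ provides a positive lower bound of the required shape. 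The precise value of the absolute constant $A$ (ensuring the numerator $1$, hence the factor $2/r$) can be extracted from the explicit modular covering map of $\Omega$ or quoted from the known two-sided estimates for $\lambda_\Omega$; I would invoke the latter.

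Granting~\eqref{2a}, the bound~\eqref{2b} is pure calculus: with $t=|f(a)|$,
\begin{equation*}
f^\#(a)=\frac{|f'(a)|}{1+|f(a)|^2}\le\frac 2 r\cdot\frac{t\bigl(|\log t|+A\bigr)}{1+t^2},
\end{equation*}
and the function $\phi(t)=t(|\log t|+A)/(1+t^2)$ tends to $0$ both as $t\to0$ and as $t\to\infty$ and is continuous on $[0,\infty)$; its supremum $M$ is an absolute constant, giving~\eqref{2b} with $B=2M$.

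Alternatively, and more self-containedly, I would obtain~\eqref{2b} directly from Montel and Marty without appealing to~\eqref{2a}. After the substitution $g(z)=f(a+rz)$ one reduces to $a=0$, $r=1$, and the family of all functions holomorphic on $\D$ that omit $0$ and $1$ is normal by Montel's fundamental normality test, since such functions also omit $\infty$. Marty's theorem then bounds $g^\#(0)$ by an absolute constant $B$, and the scaling relation $g^\#(0)=r\,f^\#(a)$ returns~\eqref{2b}. This second route is the one I would emphasise, as it is elementary and avoids the delicate density asymptotics; the hyperbolic-metric argument is needed only for the sharper statement~\eqref{2a}.
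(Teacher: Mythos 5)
Your proposal is correct in substance, and it is worth noting that the paper itself gives no proof of this lemma: inequality \eqref{2a} is quoted from Hayman's book (it is Landau's theorem), and the paper remarks only that \eqref{2b} follows either from \eqref{2a} or ``directly from Montel's theorem and Marty's theorem.'' Your Montel--Marty argument for \eqref{2b} (rescale to $g(z)=f(a+rz)$, use normality of the family of functions omitting $0$ and $1$, then Marty to bound $g^\#(0)=r f^\#(a)$) is exactly the route the paper alludes to, and your calculus derivation of \eqref{2b} from \eqref{2a} is also fine. Your hyperbolic-metric derivation of \eqref{2a} via Schwarz--Pick and a density lower bound for $\lambda_\Omega$ is the standard proof underlying the Hempel--Lai sharp constant that the paper cites in \eqref{2b1}.

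One step is imprecise as written, though your stated fallback covers it. The lemma fixes the constant $2/r$ on the right of \eqref{2a}, so you need the density estimate with leading constant exactly $1$, i.e.\ $\lambda_\Omega(w)\,|w|\,\bigl(\bigl|\log|w|\bigr|+A\bigr)\geq 1$. A compactness argument that merely produces \emph{some} positive lower bound $c$ for this product is not ``of the required shape'': if $c<1$, no enlargement of $A$ repairs it, since the needed correction $(\tfrac1c-1)\bigl|\log|w|\bigr|+\tfrac{A}{c}$ is unbounded near the cusps $0$ and $\infty$. The correct compactness argument is applied to $G(w)=1/(\lambda_\Omega(w)|w|)-\bigl|\log|w|\bigr|$, which extends continuously to the three punctures (with finite limits at $0$ and $\infty$ precisely because the cusp asymptotics have leading constant $1$, and limit $0$ at $w=1$), so that $A=\max(\sup G,1)$ works. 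Since you explicitly offer to quote the known two-sided estimates for $\lambda_\Omega$ instead --- which is no more than the paper itself does --- this is a presentational flaw rather than a gap.
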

Hempel~\cite[Theorem~2]{Hempel1979} and Lai~\cite{Lai1979}
showed that the sharp constant $A$ in~\eqref{2a} is given by
\begin{equation}\label{2b1}
A=\frac{\Gamma(\tfrac14)^4}{4\pi^2}=4.3768796\dots.
\end{equation}

The limit function $g$ occurring in Zalcman's lemma satisfies $g^\#(0)=1$ and $g^\#(z)\leq 1$ 
for all $z\in \C$. 
If the functions in the family under consideration are holomorphic, then the limit function is entire. 
A result of Clunie and Hayman~\cite[Theorem~3]{Clunie1966} implies that an entire function with bounded 
spherical derivative has order at most~$1$. (This result can also be deduced from Lemma~\ref{lemma5}
below.)
Thus in the case of a family of holomorphic functions
the limit function $g$ in Zalcman's lemma is of order at most~$1$.

If, in addition, the functions in our family have no zeros, then this also holds for the limit function~$g$. It follows that $g$ then must be of the form $g(z)=e^{cz+d}$ with constants $c$ and~$d$.
In fact, a simple computation using $g^\#(z)\leq g^\#(0)=1$ shows that then $|c|=2$ and $\re d=0$.

The following result is a quantitative form of the above observation.

\begin{lemma}\label{lemma6}
Let $R>2^8B$, where $B$ is the constant from Lemma~\ref{lemma4}, and let
$g\colon D(0,R)\to \C$ be holomorphic with $g(z)\neq 0$ for all $z\in D(0,R)$. Suppose further that $g^\#(0)=1$ and
\begin{equation}\label{2c}
g^\#(z)\leq 1+\frac{|z|}{R}\quad\text{for }|z|<R.
\end{equation}
Then there exists $b\in \overline{D}(0,B)$ such that $g$ has the form
\begin{equation}\label{2d}
g(z)=\exp\!\left(c(z-b)+\delta(z)\right)
\end{equation}
where $c\in \C$ with
\begin{equation}\label{2e}
1\leq 2-2^{8}\frac B R \leq|c|\leq2+2\frac{B}{R}<3
\end{equation}
and
\begin{equation}\label{2f}
|\delta(z)|\leq 2^{7}\frac{|z-b|^2}{R}\quad\text{for }|z-b|\leq\frac{1}{16}R.
\end{equation}
\end{lemma}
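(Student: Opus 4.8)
The plan is to exploit that a zero-free holomorphic function on a disk has a holomorphic logarithm: write $g=e^{h}$ with $h$ holomorphic on $D(0,R)$. I will first locate a point $b$ with $g(b)=1$, then fix the branch of $h$ so that $h(b)=0$, put $c=h'(b)$ and $\delta(z)=h(z)-c(z-b)$. This gives $g(z)=\exp\!\big(c(z-b)+\delta(z)\big)$ with $\delta(b)=\delta'(b)=0$ automatically, so everything reduces to estimating $h$ (equivalently $h'$) near $b$.

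To produce $b$ I use Landau's theorem in the form \eqref{2b}. Since $g\neq 0$ on $D(0,R)$, if $g$ were $\neq 1$ on $\overline D(0,B)$ then $g\neq 0,1$ on $D(0,B+\eta)$ for some small $\eta$ with $B+\eta<R$ (the $1$-points are isolated), and \eqref{2b} would force $g^{\#}(0)\le B/(B+\eta)<1$, contradicting $g^{\#}(0)=1$. Hence some $b\in\overline D(0,B)$ satisfies $g(b)=1$, and I choose the branch with $h(b)=0$. The bookkeeping is then carried by the identity
\[
g^{\#}(z)=\frac{|g'(z)|}{1+|g(z)|^{2}}=\frac{|h'(z)|}{2\cosh(\re h(z))}.
\]
At $b$ we have $\re h(b)=0$, so $|c|=|h'(b)|=2g^{\#}(b)\le 2(1+|b|/R)\le 2+2B/R$, which is the upper bound in \eqref{2e}; and at $0$ the identity gives $|h'(0)|=2\cosh(\re h(0))\,g^{\#}(0)\ge 2$.

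Next I reduce the two remaining assertions to the single estimate
\[
|h''(z)|\le \frac{2^{8}}{R}\qquad(z\in D(b,R/16)).
\]
Granting it, the lower bound in \eqref{2e} follows by integrating $h''$ along the segment from $b$ to $0$ (which lies in $D(b,R/16)$ since $B<R/16$): $|c|\ge|h'(0)|-2^{8}B/R\ge 2-2^{8}B/R$, and this exceeds $1$ exactly because $R>2^{8}B$. The bound \eqref{2f} follows by integrating $h''=\delta''$ twice from $b$ and using $\delta(b)=\delta'(b)=0$, which yields $|\delta(z)|\le\tfrac12\cdot\tfrac{2^{8}}{R}|z-b|^{2}=2^{7}|z-b|^{2}/R$. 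Finally, by Cauchy's inequality on the circle $|\zeta-z|=R/16$ (contained in $D(b,R/8)$ for $z\in D(b,R/16)$), the displayed $h''$-bound follows from $|h'|\le 16$ on $D(b,R/8)$.

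The hard part is precisely this last step: bounding $|h'|=|g'/g|$ by an absolute constant on a disk of radius comparable to $R$. The identity only gives $|h'(z)|\le 2(1+|z|/R)\cosh(\re h(z))$, and $\cosh(\re h)=\cosh\log|g|$ is large wherever $|g|$ is far from $1$ — which genuinely happens on such a disk (already for $g=e^{2z}$, where $\re h$ ranges over an interval of length $\asymp R$ while $h'\equiv 2$). Thus the pointwise bound on $g^{\#}$ is "active" only along the level curve $E=\{|g|=1\}=\{\re h=0\}$ through $b$, where $\cosh(\re h)=1$ and hence $|h'|\le 2(1+|z|/R)\le 4$; off $E$ the boundedness of $h'$ is a genuine quantitative Clunie--Hayman phenomenon expressing that $g$ is uniformly close to an exponential. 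The route I would take is to control the harmonic function $\re\delta=\re h-\re\!\big(c(z-b)\big)$, which vanishes to second order at $b$ and, unlike $\re h$ itself, is tied down (it equals $-\re(c(z-b))$ on $E$), and then to convert a bound on $\re\delta$ on a macroscopic disk into a bound on $\delta$, and hence on $h'=c+\delta'$, via Borel--Carath\'eodory together with Cauchy's inequality. Making this closeness quantitative with absolute constants — turning the normalization $g^{\#}(0)=1$ and the growth bound \eqref{2c} into uniform proximity of $g$ to $e^{c(z-b)}$ across a disk of radius $\asymp R$ — is the main obstacle, and is where I expect the real work, and the powers of two, to reside.
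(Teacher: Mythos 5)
Your setup coincides with the paper's: you locate $b\in\overline{D}(0,B)$ with $g(b)=1$ via Landau's theorem, write $g=e^{h}$ with $h(b)=0$, set $c=h'(b)$ and $\delta(z)=h(z)-c(z-b)$, get the upper bound in \eqref{2e} from $|c|=2g^{\#}(b)$, and derive the lower bound in \eqref{2e} together with \eqref{2f} from a bound $|h'|\le 16$ on $D(b,R/8)$ by Cauchy estimates and integration (your detour through $|h''|\le 2^{8}/R$ is equivalent to the paper's direct contour estimate for $h'(z)-h'(b)$ and reproduces the same constants). But the one step you yourself flag as ``the main obstacle''---showing $|h'|$ is bounded by an absolute constant on a disk of radius comparable to $R$---is exactly the step you do not prove, and the route you sketch for it does not work as stated: knowing $\re\delta$ only on the level curve $E=\{|g|=1\}$ gives no control of the harmonic function $\re\delta$ on a macroscopic disk (you have no a priori information on the geometry of $E$, and no bound off $E$ to feed into Borel--Carath\'eodory), while bounding $\delta$ is precisely what you are trying to accomplish, so the argument as described is circular. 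The danger you correctly identify---that $\cosh(\re h)$ could be huge, making the spherical-derivative hypothesis vacuous pointwise---is real and must be excluded by an actual argument.

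The missing ingredient is the paper's Lemma~\ref{lemma5} (from \cite{Bergweiler2016}): if $f$ is holomorphic on $D(a,r)$ with $|f(a)|\le K$ and $|f'(z)|\le L$ whenever $|f(z)|=K$, then $|f(z)|<K\exp\!\left(2L|z-a|/K\right)$ on $D(a,r/2)$. This is the device that converts information on the level set $\{|g|=1\}$ into a growth bound on a whole disk. Applied with $K=1$ and $L=2$ (since \eqref{2c} gives $g^{\#}\le 2$ on $\overline{D}(b,R/2)$, hence $|g'|\le 2$ wherever $|g|=1$ there), it yields $\log|g(z)|\le 4|z-b|$, i.e.\ the one-sided bound $\re h(z)\le 4|z-b|$ on $D(b,R/4)$; the Schwarz integral formula (Borel--Carath\'eodory) then converts this into $|h'|\le 16$ on $\overline{D}(b,R/8)$, after which your remaining steps go through. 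So the architecture of your proof is correct, but it has a genuine gap at its central quantitative step.
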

Corresponding to $\re d=0$ in the equation $g(z)=e^{cz+d}$ noted above one can also prove that 
$\re cb\leq C/R$ for some constant~$C$, but we will not need this result.

To prove Lemma~\ref{lemma6} we will use the following result~\cite[Lemma~3.1]{Bergweiler2016}.
\begin{lemma}\label{lemma5}
Let $f\colon D(a,r)\to\C$  be holomorphic and $K,L>0$.
Suppose that $|f(a)|\leq K$ and that $|f'(z)|\leq L$ whenever $|f(z)|= K$. Then
\begin{equation}\label{la1a}
|f(z)|< K \exp\!\left( \frac{2L}{K}|z-a|\right)
\quad\text{for}\ z\in D\!\left(a,\frac{r}{2}\right).
\end{equation}
\end{lemma}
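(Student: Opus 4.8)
My plan is to reduce the statement to an estimate for a single positive harmonic function, and to extract the constant~$2$ from a boundary representation on a disk of half the radius. First I would normalize: replacing $f$ by $g(w)=f(a+w)/K$ on $D(0,r)$ turns the hypotheses into $|g(0)|\le 1$ and $|g'(w)|\le M:=L/K$ whenever $|g(w)|=1$, and reduces the claim to $|g(z_0)|<\exp(2M|z_0|)$ for $|z_0|<r/2$. If $|g(z_0)|\le 1$ this is immediate, so I may assume $|g(z_0)|>1$ and let $\Omega$ be the connected component of $\{w\in D(0,r):|g(w)|>1\}$ containing~$z_0$. Since $g$ has no zeros in $\Omega$, the function $u:=\log|g|$ is positive and harmonic there; on the interior part $\gamma:=\partial\Omega\cap D(0,r)$ of its boundary one has $|g|=1$, so there $u=0$ and, crucially, $|\nabla u|=|g'/g|=|g'|\le M$.

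The heart of the argument is to convert this pointwise gradient bound along $\gamma$ into a bound for $u(z_0)$. The tool I would use is a Poisson--Jensen (Green) representation of $u$ on the disk $\Delta:=D(z_0,r/2)$, whose closure lies in $D(0,r)$ precisely because $|z_0|<r/2$. The base point $0$, where $u=0$, lies within distance $|z_0|$ of the centre $z_0$, and the passage from radius $r$ to the half-radius $r/2$ is exactly what keeps the relevant Poisson factor (equivalently the harmonic measure of the uncontrolled part of the boundary) bounded by~$2$; combining this with the gradient bound $M$ along $\gamma$ should yield $u(z_0)\le 2M|z_0|$, which is the desired estimate. I expect the constant $2$ here to be an artifact of the method rather than sharp, the extremal case $g(w)=e^{\lambda w}$ suggesting that $M|z_0|$ is the true order.

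The main obstacle is that $\Omega$ can never be a relatively compact, fully controlled region. Indeed, if $\overline{\Omega}\subset D(0,r)$ with $|g|\equiv 1$ on all of $\partial\Omega$, then the maximum principle applied to the positive harmonic $u$ with boundary values $0$ gives a contradiction; hence every such component must reach the outer boundary $\partial D(0,r)$, along which nothing is known about $g$. This is why the naive route fails: one cannot apply the maximum principle to $g'/g$ (bounded by $M$ only on $\gamma$) to get $|g'/g|\le M$ throughout $\Omega$ and then integrate $(\log g)'$ from a level-set point to $z_0$. Controlling the contribution of this uncontrolled outer boundary is the crux, and it is exactly the restriction of $z_0$ to the smaller disk $D(a,r/2)$, together with the weaker constant $2$ in place of $1$, that makes the estimate go through. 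The hard part will be getting these quantitative factors right while showing the outer-boundary contribution is harmless.
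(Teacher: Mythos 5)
The paper does not prove this lemma at all: it is quoted verbatim from \cite[Lemma~3.1]{Bergweiler2016}, so there is no in-paper argument to compare against. Judged on its own terms, your proposal sets the stage correctly (normalization, the component $\Omega$ of $\{|g|>1\}$, the positive harmonic function $u=\log|g|$ vanishing on $\gamma=\partial\Omega\cap D(0,r)$ with $|\nabla u|=|g'|\le M$ there, and the correct observation that $\Omega$ must reach $\partial D(0,r)$, which is why the maximum principle applied to $g'/g$ fails). But the step you yourself call the heart of the argument is missing, and the mechanism you sketch for it does not work as described. The disk $\Delta=D(z_0,r/2)$ is \emph{not} contained in $\Omega$ (it contains $0$, where $|g|\le 1$), so a Poisson--Jensen representation of $u$ on $\Delta$ forces you to pass to $v=\log^+|g|$ with its Riesz measure on the level set; the boundary values of $v$ on $\partial\Delta$ are completely uncontrolled, the length of $\gamma\cap\Delta$ (hence the total Riesz mass) is uncontrolled, and the Poisson/Harnack factor comparing the points $0$ and $z_0$ in $\Delta$ is $\frac{r/2+|z_0|}{r/2-|z_0|}$, which blows up as $|z_0|\to r/2$ --- it is not ``bounded by $2$''. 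Nothing in the sketch extracts $u(z_0)\le 2M|z_0|$, and you concede as much in the last sentence.

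The missing idea is to replace the fixed disk $D(z_0,r/2)$ by the \emph{largest} disk $D(z_0,\sigma)$ contained in $\{|g|>1\}$. Since $|g(0)|\le 1$, this disk cannot contain $0$, so $\sigma\le |z_0|$; and since $|z_0|<r/2$, the closure $\overline{D}(z_0,\sigma)\subset D(z_0,|z_0|+\epsilon)\subset D(0,r)$, so the disk is stopped by the level set and there is a point $w$ with $|w-z_0|=\sigma$ and $|g(w)|=1$. Now $u$ is positive harmonic on $D(z_0,\sigma)$ with $u(w)=0$, and Harnack's inequality $u(z)\ge \frac{\sigma-|z-z_0|}{\sigma+|z-z_0|}\,u(z_0)$ along the radius toward $w$ gives the Hopf-type bound $|\nabla u(w)|\ge u(z_0)/(2\sigma)$; combined with $|\nabla u(w)|=|g'(w)|\le M$ this yields $u(z_0)\le 2M\sigma\le 2M|z_0|$, which is the claim. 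This is where the factor $2$ actually comes from (the denominator $\sigma+|z-z_0|\le 2\sigma$ in Harnack), not from the passage from $r$ to $r/2$; the half-radius only guarantees that the maximal disk is bounded by the level set rather than by $\partial D(0,r)$. (A minor side remark: as stated, the strict inequality in \eqref{la1a} fails at $z=a$ when $|f(a)|=K$; the non-strict version is what is actually used in the proof of Lemma~\ref{lemma6}.)
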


\begin{proof}[Proof of Lemma~\ref{lemma6}]
By Lemma~\ref{lemma4} and since $g(z)\neq 0$ for all $z\in D(0,R)$ there exists
$b\in \overline{D}(0,B)$ with $g(b)=1$. Since $R>2B$ and thus $\overline{D}(b,R/2)\subset D(0,R)$ we have
\begin{equation}\label{2g}
g^\#(z)\leq 2\quad\text{for }|z-b|\leq \frac 1 2 R
\end{equation}
by \eqref{2c}. It follows that if $z\in \overline{D}(b,R/2)$ satisfies $|f(z)|=1$, then $|f'(z)|\leq 2$. Lemma~\ref{lemma5} now yields that
\begin{equation}\label{2h}
|g(z)|\leq \exp(4|z-b|)\quad\text{for }|z-b|\leq \frac 1 4 R.
\end{equation}

Since $g$ has no zeros it is of the form $g(z)=e^{h(z)}$ for some holomorphic function $h\colon D(0,R)\to \C$. 
Since $g(b)=1$ we may choose $h$ with $h(b)=0$. The equation \eqref{2h} now takes the form
\begin{equation}\label{2i}
\re h(z)=\log |g(z)|\leq 4|z-b|\quad\text{for } |z-b|\leq \frac 1 4 R.
\end{equation}
The Schwarz integral formula says that
\begin{equation}\label{schwarz0}
h(z)=\frac{1}{2\pi i} \int_{|\zeta-b|=\frac 1 4 R}\frac{\zeta+z}{\zeta-z}\re h(\zeta)\frac{d\zeta}{\zeta} +i\im h(b)
 \quad \text{for }|z-b|<\frac 1 4 R.
\end{equation}
Differentiating this we obtain
\begin{equation}\label{2j}
h'(z)=\frac{1}{\pi i} \int_{|\zeta-b|=\frac 1 4 R}\frac{\re h(\zeta)}{(\zeta-z)^2}d\zeta\quad\text{for } |z-b|<\frac 1 4 R
\end{equation}
and thus
\begin{equation}\label{2k}
\begin{aligned}
|h'(z)|
&\leq\frac{R}{2\left(\frac 1 4 R -|z-b|\right)^2}\max_{|\zeta-b|=\frac 1 4 R}\re h(\zeta)
\\ &
\leq \frac{2R|z-b|}{\left(\frac 1 4 R -|z-b|\right)^2} \quad \text{for }|z-b|<\frac 1 4 R
\end{aligned}
\end{equation}
by \eqref{2i}. Hence
\begin{equation}\label{2l}
|h'(z)|\leq 16\quad\text{for } |z-b|\leq \frac 1 8 R.
\end{equation}
This implies that
\begin{equation}\label{2m}
\begin{aligned}
|h'(z)-h'(b)|&=\left|\frac{1}{2\pi i}\int_{|\zeta-b|=\frac 1 8 R}\left(\frac{h'(\zeta)}{\zeta-z}-\frac{h'(\zeta)}{\zeta-b}\right)d\zeta\right|\\
             &=\frac{1}{2\pi}\left|\int_{|\zeta-b|=\frac 1 8 R}  \frac{h'(\zeta)(z-b)}{(\zeta-z)(\zeta-b)}d\zeta\right|\\
             &\leq \frac 1 8 R  \cdot \frac{16 |z-b|}{\left(\frac 1 8 R-|z-b|\right)\frac 1 8 R}\\
             &=\frac{16|z-b|}{\frac 1 8 R-|z-b|}\quad\text{for }|z-b|<\frac 1 8 R
\end{aligned}
\end{equation}
and hence that
\begin{equation}\label{2n}
|h'(z)-h'(b)|\leq 16^{2}\frac{|z-b|}{R}\quad\text{for }|z-b|\leq\frac {1}{16}R.
\end{equation}
Integrating this we obtain, using $h(b)=0$, that
\begin{equation}\label{2o}
\begin{aligned}
|h(z)-h'(b)(z-b)|&=\left|\int^z_b(h'(\zeta)-h'(b))d\zeta\right|\\
                 &\leq\frac{16^2}{R}\int^{|z-b|}_0 t\;dt=2^{7}\frac{|z-b|^2}{R}\quad\text{for }|z-b|\leq \frac{1}{16}R.
\end{aligned}
\end{equation}
With $c=h'(b)$ and
\begin{equation}\label{2p}
\delta(z)=h(z)-h'(b)(z-b)=h(z)-c(z-b)
\end{equation}
we thus have \eqref{2d} with $\delta(z)$ satisfying \eqref{2f}.

It remains to prove \eqref{2e}. In order to do so we note that $1+x^2\geq 2x$ for $x\in \R$ and
$h'(z)=g'(z)/g(z)$ so that
\begin{equation}\label{2q}
1=g^\#(0)=\frac{|g'(0)|}{1+|g(0)|^2}=\frac{|g(0)|}{1+|g(0)|^2}|h'(0)|\leq \frac 1 2 |h'(0)|.
\end{equation}
Thus $|h'(0)|\geq 2$ and hence \eqref{2n} yields that
\begin{equation}\label{2r}
|c|=|h'(b)|\geq|h'(0)|-|h'(0)-h'(b)|
   \geq 2-2^{8}\frac{|b|}{R}\geq 2-2^{8}\frac B R.
\end{equation}
On the other hand, since $g(b)=1$, we have
\begin{equation}\label{2s}
\begin{aligned}
|c|&=|h'(b)|=2\frac{|g(b)|}{1+|g(b)|^2}|h'(b)|
\\ &
=2g^\#(b)
\leq 2\left(1+\frac{|b|}{R}\right)\leq2+2\frac{B}{R}
\end{aligned}
\end{equation}
by \eqref{2c}. Combining \eqref{2r} and \eqref{2s} we obtain \eqref{2e}.
\end{proof}

\section{Proof of Theorem~\ref{thm3}} \label{proofthm3}
We may assume without loss of generality that $L=\R$,
\begin{equation}\label{3a0}
D^+=\{z\in D\colon \im z >0\}
\quad\text{and}\quad
D^-=\{z\in D\colon \im z <0\}.
\end{equation}
We apply Lemma~\ref{lemma2} with
$\varphi(t)=(\log t)^2$. Let $f_k$, $z_k$, $\varrho_k$, $R_k$ and $g_k$ be as there.
Thus \eqref{1a}, \eqref{1b}, \eqref{1d1} and \eqref{1d2} hold.
Moreover,
\begin{equation}\label{3a1}
g_k^\#(0)=\varrho_k f_k^\#(z_k)=1.
\end{equation}

We now apply Lemma~\ref{lemma6} with $g=g_k$ and $R=R_k$. With $b$, $c$ and $\delta(z)$ as there we 
put $b_k=b$, $c_k=c$ and $\delta_k(z)=\delta(z)$.

Lemmas~\ref{lemma2} and~\ref{lemma6} describe the behavior of $f_k$ in the disk $D(z_k,\rho_k R_k)$.
We will see that $|\im z_k|=o(\rho_k R_k)$ so that for large $k$
this disk intersects both $D^+$ and~$D^-$.
Moreover, we will see that 
\begin{equation}\label{3i1}
\arg c_k =\frac{\pi}{2}+o(1)
\quad\text{or}\quad
\arg c_k =-\frac{\pi}{2}+o(1).
\end{equation}
Assuming that the second alternative in \eqref{3i1} holds we can
deduce from Lemma~\ref{lemma6} that $|f_k|$ is large
at certain points of $D(z_k,\rho_k R_k)\cap D^+$ and small at 
certain points of $D(z_k,\rho_k R_k)\cap D^-$.
Landau's theorem (Lemma~\ref{lemma4}) will then imply that $|f_k|$ is large in the whole domain $D^+$
while $|f_k|$ is small in~$D^-$.

To carry out this argument, we need explicit estimates. The relation between $\rho_k$ and $R_k$ that is
required is already given by Lemma~\ref{lemma2}. We will now use Lemma~\ref{lemma6} to obtain a 
quantitative version of~\eqref{3i1}.

In order to do so, we note that $f_k(z_k+\varrho_k b_k)=g_k(b_k)=1$ and thus
\begin{equation}\label{3a}
a_k:=z_k+\varrho_k b_k\in L=\R.
\end{equation}
Hence $|\im z_k|=O(\rho_k)=o(\rho_k R_k)$ as mentioned above.
Let 
\begin{equation}\label{3b}
b_k'=b_k+\frac{2\pi i}{c_k} 
\quad\text{and}\quad
\varepsilon_k=\frac{2^{15}}{R_k}.
\end{equation}
For $|z-b_k'|=\varepsilon_k$ and large $k$ we then have
\begin{equation}\label{3c}
\begin{aligned}
& \quad \; \left| g_k(z)-1-\left(\exp\!\left(c_k(z-b_k')\right)-1\right)\right|\\
&=
\left|  \exp\!\left(c_k(z-b_k')+\delta_k(z)\right) -\exp\!\left(c_k(z-b_k')\right)\right|\\
&=
\left|  \exp(c_k(z-b_k'))\right|\cdot\left| \exp(\delta_k(z)) -1\right|.
\end{aligned}
\end{equation}
Noting that $e^x-1\leq 2x$ for small positive $x$ we deduce from~\eqref{2e} and~\eqref{2f} that
if $|z-b_k'|=\varepsilon_k$ and $k$ is large, then 
\begin{equation}\label{3c1}
\begin{aligned}
& \quad\; \left| g_k(z)-1-\left(\exp\!\left(c_k(z-b_k')\right)-1\right)\right|
\\ &
\leq
2 \exp(|c_k|\varepsilon_k)\cdot \left|\delta_k(z)\right|
\leq
2^{8} \exp (3\varepsilon_k)\frac{\left|z-b_k'\right|^2}{R_k}
\\ &
\leq
2^{9} \frac{\left(\left|b_k-b_k'\right|+\varepsilon_k\right)^2}{R_k}
\leq
2^{9} \left(\frac{2\pi}{|c_k|}+1\right)^2 \frac{1}{R_k}
\leq
2^{15} \frac{1}{R_k}=\varepsilon_k.
\end{aligned}
\end{equation}
On the other hand, for $|z-b_k'|=\varepsilon_k$ and large $k$ we also have
\begin{equation}\label{3d}
\left|  \exp(c_k(z-b_k'))-1\right|
\geq \frac34 \left|  c_k(z-b_k')\right|
> \left|  z-b_k'\right| =\varepsilon_k.
\end{equation}
It now follows from Rouch\'e's theorem that there exists $b_k^*\in D(b_k',\varepsilon_k)$
such that $g_k(b_k^*)=1$.
With
\begin{equation}\label{3a2}
a_k^*:=z_k+\varrho_k b_k^*
\end{equation}
we thus have  $f_k(a_k^*)=1$ and hence $a_k^*\in\R$.
Together with~\eqref{3a} we find that
\begin{equation}\label{3e}
a_k^*-a_k=\varrho_k(b_k^*-b_k)\in\R.
\end{equation}
It follows that
\begin{equation}\label{3f}
\left|\im (b_k'-b_k)\right|
=\left|\im (b_k'-b_k^*)\right|
\leq\left|b_k'-b_k^*\right|< \varepsilon_k
\end{equation}
while 
\begin{equation}\label{3g}
\left|b_k'-b_k\right| =\frac{2\pi}{|c_k|}\geq \frac{2\pi}{3}.
\end{equation}
Hence 
\begin{equation}\label{3h}
\arg (b_k'-b_k) =\arg\!\left(\frac{2\pi i}{c_k}\right) =O( \varepsilon_k) 
\end{equation}
which implies that 
\begin{equation}\label{3i}
\arg c_k =\frac{\pi}{2}+O( \varepsilon_k) 
\quad\text{or}\quad
\arg c_k =-\frac{\pi}{2}+O( \varepsilon_k)
\end{equation}
as $k\to\infty$.
We assume first that the second alternative in \eqref{3i} holds for all $k$ so that
\begin{equation}\label{3j}
c_k=-2i +O( \varepsilon_k)
\end{equation}
by~\eqref{2e}.
(We will see later that this corresponds to the case $f_k|_{D^+}\to\infty$.)
In view of~\eqref{3b} we thus have
\begin{equation}\label{3k}
|c_k+2i|\leq\frac{C}{R_k}
\end{equation}
for some constant~$C$.

We now fix a small positive constant $\eta$ and put
\begin{equation}\label{3l}
u_k=b_k+i \eta R_k.
\end{equation}
By Lemma~\ref{lemma6} we then have
\begin{equation}\label{3m}
g_k(u_k)=\exp(c_k(u_k-b_k)+\delta_k(u_k)) =\exp(c_ki\eta R_k+\delta_k(u_k))
\end{equation}
and thus \eqref{2f} and \eqref{3k} yield that
\begin{equation}\label{3n}
\begin{aligned}
\log |g_k(u_k)|
&=\re ( c_ki\eta R_k+\delta_k(u_k))
\\ &
= \re ( 2\eta R_k +(c_k+2i)i\eta R_k+\delta_k(u_k))
\\ &
\geq  2\eta R_k -\eta C-2^{7}\eta^2 R_k
\geq \eta R_k
\end{aligned}
\end{equation}
for large $k$,
provided $\eta$ has been chosen small enough.
With 
\begin{equation}\label{3o}
\alpha_k:=z_k+\varrho_k u_k
\end{equation}
we thus have
\begin{equation}\label{3p}
\log |f_k(\alpha_k)|\geq \eta R_k
\end{equation} 
for large~$k$.
Next we note that \eqref{3a}, \eqref{3l} and \eqref{3o} imply that
\begin{equation}\label{3q}
\alpha_k=z_k+\varrho_k u_k =z_k+\varrho_k(b_k+i\eta R_k)
=a_k+i \eta\varrho_k R_k
\end{equation}
and hence
\begin{equation}\label{3r}
\im \alpha_k=\eta\varrho_k R_k .
\end{equation}
Since $z_k\to z_0$, $\rho_k\to 0$ and $|b_k|\leq B$ we deduce from~\eqref{1d1} that
\begin{equation}\label{3s}
\alpha_k=z_k+\varrho_k b_k+ i \eta \varrho_k R_k\to z_0
\end{equation}
as $k\to\infty$.

Let now $d>0$ be such that the straight line segment connecting $z_0$ and $z_1:=z_0+id$ is
contained in $D$.
We put $\beta_k=\alpha_k+id$ and note that
\begin{equation}\label{3t}
\beta_k\to z_0+id=z_1 \in D^+
\end{equation}
as $k\to\infty$ by~\eqref{3s}.
We may thus assume that the line segment connecting $\alpha_k$ and $\beta_k$ is contained 
in $D$ for all $k\in\N$.

Let $A$ be the constant from Lemma~\ref{lemma4}. By~\eqref{3p} we may assume that 
\begin{equation}\label{3u}
\log|f_k(\alpha_k)| >A 
\end{equation}
for all $k\in\N$. Let now 
\begin{equation}\label{3v}
t_k=\min \{ t\in [0,d]\colon \log |f_k(\alpha_k+it)|\leq A\},
\end{equation}
with $t_k=d$ if $\log |f_k(\alpha_k+it)|>A$ for all $t\in [0,d]$.
Put $\gamma_k=\alpha_k+it_k$. 
Since $f_k(z)\neq 0,1$ for $\im z>0$, Lemma~\ref{lemma4} and~\eqref{3r} yield that 
if $0\leq t\leq t_k$, then
\begin{equation}\label{3w}
\begin{aligned}
\frac{|f_k'(\alpha_k+it)|}{|f_k(\alpha_k+it)|\log |f_k(\alpha_k+it)|}
& \leq
\frac{2 |f_k'(\alpha_k+it)|}{|f_k(\alpha_k+it)|(\log |f_k(\alpha_k+it)|+A)}
\\ &
\leq 
\frac{4}{\im(\alpha_k+it)}
= 
\frac{4}{\eta \varrho_k R_k+t} .
\end{aligned}
\end{equation}
For suitable branches of the logarithm we thus obtain
\begin{equation}\label{3x}
\begin{aligned}
|\log\log f_k(\alpha_k)-\log\log f_k(\gamma_k)|
&= 
\left|\int_0^{t_k} \frac{f_k'(\alpha_k+it)}{f_k(\alpha_k+it)\log f_k(\alpha_k+it)} dt\right|
\\ &
\leq 
\int_0^{t_k} \frac{|f_k'(\alpha_k+it)|}{|f_k(\alpha_k+it)|\log |f_k(\alpha_k+it)|} dt
\\ &
\leq 
4 \int_0^{t_k} \frac{dt}{\eta \varrho_k R_k+t} 
= 4\log\!\left( 1+\frac{t_k}{\eta\varrho_k R_k}\right)
\\ &
\leq 4\log\!\left( 1+\frac{d}{\eta\varrho_k R_k}\right).
\end{aligned}
\end{equation}
On the other hand,
\begin{equation}\label{3y}
\begin{aligned}
|\log\log f_k(\alpha_k)-\log\log f_k(\gamma_k)|
& \geq 
\log|\log f_k(\alpha_k)|-\log|\log f_k(\gamma_k)|
\\ &
\geq 
\log\log|f_k(\alpha_k)|-\log(\log|f_k(\gamma_k)|+\pi)
\end{aligned}
\end{equation}
if the branch of the logarithm is suitably chosen.
Combining the last two inequalities we deduce that
\begin{equation}\label{3z}
\log(\log|f_k(\gamma_k)|+\pi) \geq \log\log |f_k(\alpha_k)|
 -4\log\!\left( 1+\frac{d}{\eta\varrho_k R_k}\right).
\end{equation}
By \eqref{1d1} and our choice of $\varphi$ we have 
\begin{equation}\label{4a}
\frac{1}{\varrho_k R_k} =\varphi\!\left(\frac{1}{\varrho_k}\right)=\left(\log \frac{1}{\varrho_k}\right)^2 .
\end{equation}
Combining this with \eqref{3p} and \eqref{3z}
we obtain 
\begin{equation}\label{4c}
\log(\log|f_k(\gamma_k)|+\pi) \geq \log(\eta R_k) - 4\log\!\left( 1+\frac{d}{\eta} \left(\log \frac{1}{\varrho_k}\right)^2\right).
\end{equation}

It follows from~\eqref{4a} that 
\begin{equation}\label{4b}
\log \frac{1}{\varrho_k}\sim \log R_k 
\end{equation}
as $k\to\infty$.
Inserting this in the previous equation
we can now deduce that 
\begin{equation}\label{4d}
\log(\log|f_k(\gamma_k)|+\pi) \geq (1-o(1)) \log R_k
\end{equation}
as $k\to\infty$.
First this yields that $\gamma_k=\beta_k$ for large $k$ since otherwise we have
$\log|f_k(\gamma_k)|=A$. Hence
\begin{equation}\label{4e}
\log\log|f_k(\beta_k)| \geq (1-o(1)) \log R_k
\end{equation}
and thus $|f_k(\beta_k)|\to\infty$ as $k\to\infty$.
Since $\beta_k\to z_1\in D^+$ we deduce that $f_k|_{D^+}\to\infty$.
Essentially the same argument yields that $f_k|_{D^-}\to 0$.

Had we assumed that the first alternative holds in~\eqref{3i}, we would
have obtained $f_k|_{D^-}\to\infty$ and $f_k|_{D^+}\to 0$.
Our hypothesis that $f_k|_{D^+}$ converges thus implies that the same alternative in~\eqref{3i} holds 
for all large~$k$. This completes the proof.

\section{Proof of Theorem~\ref{thm1}} \label{proofthm1}
The rays $L_0$ and $L_1$ divide $\D$ into two subdomains $D_1$ and $D_2$.
By Montel's theorem, $\F$ is normal in $D_1\cup D_2$.
Suppose that $\F$ is not normal at some point $z_1\in \D\backslash\{0\}$.
Then $z_1\in (L_0\cup L_1)\backslash \{0\}$.
Without loss of generality we may assume that $z_1\in L_1\backslash\{0\}$, since otherwise
we may consider the family $\{1-f\colon f\in\F\}$ instead of $\F$, which corresponds to 
interchanging the roles of $L_0$ and $L_1$.

Theorem~\ref{thm3} yields that there exists a sequence
$(f_k)$ in $\F$ which tends
to $\infty$ in one of the domains $D_1$ and $D_2$
and which tends to $0$ in the other one.
It follows that $(f_k)$ is not normal at any point of $L_0\cup L_1$.
Applying Theorem~\ref{thm3} to  the family $\{1-f\colon f\in\F\}$ with
some point $z_0\in L_0\backslash\{0\}$
we see that a subsequence of $(f_k)$ tends
to $\infty$ in one of the domains $D_1$ and $D_2$
and to $1$ in the other one.
This is a contradiction.

\section{Proof of Theorem~\ref{thm2}} \label{proofthm2}
The proof of Theorem~\ref{thm2} will combine Theorem~\ref{thm1} with the following lemma.
\begin{lemma}\label{lemma7}
Let $f\colon\D\to\C$ be holomorphic, with all zeros positive and all $1$-points 
negative. Suppose that there exists $r\in (0,1)$ such that
\begin{equation}\label{5a}
\min_{|z|=r}|f(z)|>1.
\end{equation}
Then $f$ has either has no zeros and no $1$-points in $D(0,r)$, or exactly one zero and one $1$-point
in $D(0,r)$, both of which are simple.
\end{lemma}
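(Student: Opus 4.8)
The plan is to count the zeros and $1$-points of $f$ in $D(0,r)$ by the argument principle and then to exploit the separation of the two value sets by the origin to show that the common count is at most one.

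First I would record the consequence of the boundary hypothesis \eqref{5a}. By compactness there is $\mu>0$ with $|f(z)|\ge 1+\mu$ on $|z|=r$, so the image curve $\Gamma=f(\{|z|=r\})$ lies in $\{w\in\C:|w|>1\}$. Both $0$ and $1$ lie in the complementary set $\overline{D}(0,1)$, which is connected and disjoint from $\Gamma$; hence the winding number of $\Gamma$ about $0$ equals its winding number about $1$. By the argument principle the number $n_0$ of zeros of $f$ in $D(0,r)$ equals the number $n_1$ of $1$-points there, both counted with multiplicity; write $n:=n_0=n_1$. Since the zeros lie in $(0,r)$ and the $1$-points in $(-r,0)$, the function $f$ omits the values $0$ and $1$ on each of the two half-disks $D^\pm=\{z\in D(0,r):\pm\im z>0\}$. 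It remains to prove $n\le 1$, for then the single zero and single $1$-point are automatically simple.

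To locate the $1$-points I would divide out the zeros. Let $B$ be the finite Blaschke product of $D(0,r)$ whose zeros are exactly the zeros $x_j\in(0,r)$ of $f$, with multiplicities, normalized so that $|B|=1$ on $|z|=r$; because every $x_j$ is real, $B$ is real on $(-r,r)$. The quotient $G:=f/B$ is then holomorphic and zero-free in $D(0,r)$ with $|G|=|f|>1$ on the boundary, so by the minimum-modulus principle $|G|\ge 1+\mu>1$ throughout $\overline{D}(0,r)$. A point of $(-r,r)$ is a $1$-point of $f$ precisely when $B=1/G$ there; at such a point $B$ is real, hence so is $1/G$, with $|1/G|<1$. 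On $(-r,0)$ each factor of $B$ is negative, so $\operatorname{sign}B=(-1)^{n}$ there, which already places parity restrictions on how negative $1$-points can occur.

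The main obstacle is the bound $n\le 1$, that is, ruling out $n\ge 2$, and this is exactly where the positivity of the zeros and negativity of the $1$-points must be used together. The difficulty is that $f$ need not be real on $(-r,r)$: on the interval $(x_n,r)$ beyond the largest zero $x_n$ one knows only that $|f|$ runs from $0$ up to a value exceeding $1$, so $|f|$ meets $1$, but this produces only a point of modulus one, not a genuine solution of $f=1$, and a naive intermediate-value argument therefore fails. To finish I would pass from the modulus to the \emph{argument} of $f$: since $f$ omits $0$ and $1$ on $D^+$, the bounds of Landau's theorem (Lemma~\ref{lemma4}) and the Schottky-type control available on the half-disk limit the variation of $\arg f$ along arcs on which $|f|$ stays near $1$, and the aim is to convert the assumption $n\ge 2$ into a forbidden value — a solution of $f=1$ on the positive axis (or, after replacing $f$ by $1-f$, a solution of $f=0$ on the negative axis). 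Making this implication rigorous, namely turning the winding and sign data into an actual crossing of the value $1$ by $f$ on $(0,r)$, is the crux of the argument.
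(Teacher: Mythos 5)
There is a genuine gap. Your first step -- using the winding number of $f(\{|z|=r\})$ about $0$ and $1$ (the paper uses Rouch\'e's theorem to the same effect) to conclude that the numbers of zeros and $1$-points in $D(0,r)$ agree, counted with multiplicity -- is correct and matches the paper. But the entire content of the lemma is the implication $n\le 1$, and at that point your proposal stops proving and starts describing hopes: you yourself identify that $f$ need not be real on $(-r,r)$, that the intermediate-value argument fails, and that ``making this implication rigorous \dots\ is the crux of the argument.'' The Blaschke-product factorization and the sign/parity observations about $B$ on $(-r,0)$ do not lead anywhere by themselves, and the proposed route via Landau/Schottky control of $\arg f$ on the upper half-disk is not carried out and is not obviously workable: nothing in those estimates converts $n\ge 2$ into an actual solution of $f=1$ on $(0,r)$.

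The idea you are missing is the symmetrization $g(z)=f(z)\,\overline{f(\overline{z})}$, which \emph{is} real on the real axis, so that Rolle's theorem becomes available. This $g$ has $2n$ zeros, all positive and of even multiplicity, at least $n$ $1$-points in $(-r,0)$, and still satisfies $\min_{|z|=r}|g(z)|=\varrho>1$. The paper then takes the component $U$ of $g^{-1}(D(0,\varrho))$ containing the leftmost negative $1$-point; $g\colon U\to D(0,\varrho)$ is proper, $U$ contains equally many ($m$, say) zeros and $1$-points, and the Riemann--Hurwitz formula gives exactly $m-1$ critical points of $g$ in $U$. Counting the critical points forced on $(0,r)$ by the even-order zeros and by Rolle between consecutive zeros already yields $m-1$ of them, so no critical point of $g$ in $U$ can lie off $(0,r)$. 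If $n\ge 2$ there are two $1$-points of $g$ in $(-r,0)\cap U$, and Rolle produces a critical point between them -- a contradiction. This bookkeeping via Riemann--Hurwitz plus Rolle, applied to the real-symmetric auxiliary function, is the step your proposal lacks.
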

\begin{proof}
Rouch\'e's theorem implies that, counting multiplicities,
$f$ has the same number of zeros and $1$-points in $D(0,r)$.
Denote this number by~$n$. We assume that $n\neq 0$ and thus have to show that $n=1$.
We consider the function $g\colon\D\to\C$,
\begin{equation}\label{5b}
g(z)=f(z)\overline{f(\overline{z})}.
\end{equation}
Then $g(z)\in\R$ for $z\in\R$.
Counting multiplicities, $g$ has $2n$ zeros, all of which are non-negative and of even multiplicity,
and the number of $1$-points of $g$ in the interval $(-r,0)$ is at least~$n$.
Moreover,
\begin{equation}\label{5c}
\varrho:=\min_{|z|=r}|g(z)|>1.
\end{equation}
Let $U$ be the component of $g^{-1}(D(0,\varrho))$ which contains the leftmost $1$-point of~$g$
in $(-r,0)$. Then $U$ is simply-connected, $U\subset D(0,r)$, and the map $g\colon U\to D(0,\varrho)$
is proper. In particular, counting multiplicities, $U$ contains the same number of zeros and
$1$-points of $g$. Denote this number by $m$.
Since $U$ is simply-connected and symmetric with respect to the real axis 
and since $U$ contains the leftmost $1$-point of $g$ in $(-r,0)$
and at least one zero, $U$ actually contains all $1$-points in $(-r,0)$. 
The Riemann-Hurwitz formula yields that, counting multiplicities, $g$ has $m-1$ critical points 
in~$U$. 

Let $x_1,\dots,x_k$ be the zeros of  $f$ and hence of $g$ that are contained in $U$,
ordered such that $0<x_1<x_2<\dots<x_k$.
Denote by $\mu_j$ the multiplicity of $x_j$ with respect to~$g$. Thus the $\mu_j$ are even and
\begin{equation}\label{5d}
\sum_{j=1}^k \mu_j= m .
\end{equation}
The $x_j$ are also critical points of $g$ of multiplicity $\mu_j-1$.
Moreover, 
Rolle's theorem yields that each interval $(x_j,x_{j+1})$ also contains a critical point,
for $1\leq j\leq k-1$. Altogether the number of critical points of $g$ in the interval
$(0,r)$ is thus at least
\begin{equation}\label{5e}
(k-1)+ \sum_{j=1}^k (\mu_j-1) = m-1 .
\end{equation}
We conclude that all critical points of $g$ in $U$ are contained in the interval $(0,r)$.
However, if $n\geq 2$, then the interval $(-r,0)$ contains at least two $1$-points of $f$ and hence 
at least two $1$-points of~$g$.
Using  Rolle's theorem again we see $g$ has a critical point between two $1$-points and  thus
a critical point in $(-r,0)\cap U$. This is a contradiction. Thus $n=1$ as claimed.
\end{proof}
\begin{proof}[Completion of the proof of Theorem~\ref{thm2}]
Theorem~\ref{thm1} yields that $\F$ is normal in $\D\backslash\{0\}$.
Suppose that some subsequence of $(f_k)$ tends to a finite limit function, 
say $f_{k_j}\to g$ locally uniformly in $\D\backslash\{0\}$.
Then $g$ is holomorphic in $\D\backslash\{0\}$. For $r\in (0,1)$
let $M(r,g)=\max_{|z|=r}|g(z)|$ denote the maximum modulus of~$g$.
Then
\begin{equation}\label{5f}
M\!\left(r,f_{k_j}\right)
\leq M\!\left(r,g\right)+1
\end{equation}
for large~$j$. This implies that the $f_{k_j}$ form a normal family 
in $D(0,r)$, contradicting our hypothesis that no subsequence of
$(f_k)$ is normal in~$\D$. Hence $f_k\to\infty$ in $\D\backslash\{0\}$.

Next we claim that  for large $k$ the function $f_k$ has a zero in $D(0,r)$. 
Indeed, otherwise we would have 
\begin{equation}\label{5g}
\min_{|z|\leq r}|f_{k_j}(z)|=\min_{|z|=r}|f_{k_j}(z)|\to\infty
\end{equation}
as $j\to\infty$ for some subsequence $(f_{k_j})$, implying that this subsequence is normal at $0$,
contradicting our hypothesis.

For large $k$ we thus find that $\min_{|z|=r}|f_k(z)|>1$ and that $f_k$ has a zero in $D(0,r)$.
Lemma~\ref{lemma7} yields that $f_k$ has exactly one zero $a_k$ in $D(0,r)$.
Hence for large $k$ the function
 $f_k$ has the form~\eqref{0a} with some function $g_k$ which is holomorphic in~$\D$ 
and has no zeros in $D(0,r)$.
Since $f_k\to\infty$ in $\D\backslash\{0\}$ we find that $g_k\to\infty$ in~$\D$.
Moreover, since $0$ is the only point where the $f_k$ are not normal we conclude
that $a_k\to 0$.
\end{proof}

\section{Proof of Theorem~\ref{thm4}} \label{proofthm4}
In the proof of the following lemma, Landau's theorem (Lemma~\ref{lemma4}) is 
applied in
a similar way as in the proof of Theorem~\ref{thm3}.
However, this time we will use it with the sharp constant $A$ given by~\eqref{2b1}.

\begin{lemma}\label{lemma8}
Let $f\colon \D\to \C$ be holomorphic and $C =0.000024$.
Suppose that all zeros and $1$-points are contained in $D(0,C)$ and that $f$ has at
least one $1$-point and at least two zeros.
Then
\begin{equation}\label{b}
\min_{|z|=\sqrt{C}}|f(z)|>1
\end{equation}
\end{lemma}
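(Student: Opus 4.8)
The goal is to bound $\min_{|z|=\sqrt{C}}|f(z)|$ from below by $1$, where $f$ has all zeros and $1$-points inside the tiny disk $D(0,C)$ with $C=0.000024$, at least one $1$-point, and at least two zeros. The plan is to transport the value of $f$ from a point where we know something (near the origin, where zeros and $1$-points cluster) out to the circle $|z|=\sqrt{C}$, using Landau's theorem (Lemma~\ref{lemma4}) in the annulus $C<|z|<1$ where $f$ omits both $0$ and $1$. Since $\sqrt{C}\approx 0.0049$ is much larger than $C$, there is a wide annulus $C\le|z|\le\sqrt{C}$ in which $f\neq 0,1$, and this is exactly where the differential inequality~\eqref{2a} can be integrated.

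**The main estimate via Landau's theorem.** First I would locate a starting point. Because $f$ has at least two zeros in $D(0,C)$, consider the circle $|z|=\rho$ for some $\rho$ slightly larger than $C$; here $f$ is zero-free and $1$-point-free, so~\eqref{2a} applies on disks $D(z,r)$ with $r$ up to roughly $1-|z|\approx 1$. The key is to integrate the inequality
\[
\frac{|f'(z)|}{|f(z)|\,\bigl(\,\bigl|\log|f(z)|\bigr|+A\bigr)}\le \frac{2}{1-|z|}
\]
along a radial (or nearly radial) path from a point near $|z|=C$ out to $|z|=\sqrt{C}$, much as inequalities~\eqref{3w}--\eqref{3z} were integrated in the proof of Theorem~\ref{thm3}. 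This controls how fast $\log\bigl|\log|f|\bigr|$ can change, and hence bounds the ratio between $|f|$ at the inner radius and at $|z|=\sqrt{C}$. Using the sharp value $A=\Gamma(1/4)^4/(4\pi^2)$ from~\eqref{2b1} is what makes the numerics work out with the specific constant $C=0.000024$.

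**Exploiting the two zeros.** The hypothesis that there are \emph{two} zeros (and a $1$-point) is what forces $|f|$ to be genuinely large, rather than merely bounded away from $0$, on the starting circle. I expect the argument to use a Jensen-type or Rouché-type count: with two zeros inside $D(0,C)$ but the nearest $1$-point also inside $D(0,C)$, the function $f$ must grow like $(z/C)^2$ in magnitude once $|z|$ exceeds $C$ by a definite factor, so on $|z|\approx\sqrt{C}$ one already has $|f|\gg 1$ before the Landau transport even begins. Concretely I would try to show $|f(z)|\ge |z|^2/(\text{something})$ near the inner radius by factoring out the zeros and estimating the zero-free, $1$-point-free remaining factor with~\eqref{2a}, then propagate this lower bound outward.

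**The main obstacle.** The hard part will be the bookkeeping of constants: closing the gap between the crude analytic bounds from Landau's theorem and the sharp numerical target $\min_{|z|=\sqrt C}|f|>1$. In particular one must choose the inner radius and the integration path so that the factor $4\log\bigl(1+d/(\eta\varrho_k R_k)\bigr)$-type loss in~\eqref{3z} is dominated by the growth coming from the two zeros, and verify that the resulting inequality survives with $C=0.000024$. Getting the two zeros to contribute a factor large enough to overcome both the Landau distortion and the additive constant $A$ is the delicate step, and it is presumably why the theorem needs \emph{two} zeros (or one value assumed twice) rather than just one of each value.
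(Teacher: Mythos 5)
Your proposal correctly identifies the two ingredients (Landau's theorem in the region where $f$ omits $0$ and $1$, and a growth estimate coming from the two zeros), but both of your concrete plans for these steps have genuine gaps, and the paper's proof works differently at exactly these two points.

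First, the transport path. You propose to integrate the Landau inequality \emph{radially} from a circle near $|z|=C$ out to $|z|=\sqrt{C}$. On that path the relevant radius in \eqref{2a} is not $1-|z|$ but the distance to the inner boundary of the annulus, namely $|z|-C$, so the right-hand side is $2/(|z|-C)$ and the integral $\int 2\,dt/(t-C)$ from any reasonable starting radius to $\sqrt{C}$ is of size $\log(1/C)\approx 10.6$. This means $\log\bigl|\log|f|\bigr|$ can drop by about $10.6$, i.e.\ $|\log|f||$ can shrink by a factor of order $e^{10.6}\approx 4\times 10^4$; no lower bound obtainable from two zeros survives that loss. The paper instead passes to the strip $S=\{2x_0<\re z<0\}$, $x_0=\tfrac12\log C$, via $g(z)=f(e^z)$ and integrates \emph{along the vertical line} $\re z=x_0$ (the image of the circle $|z|=\sqrt{C}$), between a point where $|g|=M(\sqrt{C},f)$ and a point where $|g|=1$ --- the latter exists, within imaginary-part distance $\pi$, precisely because one argues by contradiction assuming \eqref{b} fails. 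There the Landau coefficient is $\pi/(2|x_0|)$ and the path has length at most $\pi$, so the total loss is only $\pi^2/\log(1/C)\approx 0.93$, which is what makes the numerics close.

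Second, the lower bound. You want a \emph{pointwise} lower bound $|f|\gg 1$ on a circle near $|z|=C$ by factoring out the two zeros and controlling the remaining factor $h$ with Landau's theorem; but $h$ is not $1$-point-free (the $1$-points of $f$ are not $1$-points of $h$), so \eqref{2a} does not apply to it, and nothing prevents $|h|$ from being tiny. The paper needs only a lower bound on the \emph{maximum} $M(\sqrt{C},f)$, and gets it in one line from the Poisson--Jensen formula evaluated at a $1$-point $b$: since $\log|f(b)|=0$ and each of the $m\ge 2$ zeros contributes at least $\log\frac{r^2-C^2}{2Cr}$ to the Blaschke sum, one obtains $\log M(\sqrt{C},f)\ge 2\log\frac{1-C}{2\sqrt{C}}$. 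Combining that with the along-the-circle transport yields the contradiction. Without these two ideas your outline cannot be closed, even qualitatively.
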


\begin{proof}

Let $b$ be a $1$-point of $f$ and let $a_1,\cdots,a_m$ be the zeros of $f$ so that $m\geq 2$. The Poisson-Jensen formula yields that if $C<r<1$, then
\begin{equation}\label{d}
\begin{aligned}
0&=\log|f(b)|\\
   &=\frac{1}{2\pi}\int^{2\pi}_0 \log|f(re^{i\theta})|\re\!\left(\frac{re^{i\theta}+b}{re^{i\theta}-b}\right)d\theta
  -\sum^m_{j=1}\log \left|\frac{r^2-\overline{a}_jb}{r(b-a_j)}\right|\\
   &\leq \log M(r,f)-2\log \frac{r^2-C^2}{2C r}.
\end{aligned}
\end{equation}
It follows that
\begin{equation}\label{e}
\log M(\sqrt{C},f)\geq 2 \log \frac{C-C^2}{2C^{3/2}}=2\log \frac{1-C}{2\sqrt{C}}>0.
\end{equation}

Let $x_0=\frac 1 2 \log C$ and
\begin{equation}\label{f}
S=\{z\colon  2x_0<\re z < 0\}.
\end{equation}
Then $g\colon  S\to \C$, $g(z)=f(e^z)$, satisfies $g(z)\neq 0$ and $g(z)\neq 1$ for all $z\in S$. For $y_0\in \R$ and $z_0=x_0+i y_0$ the map
\begin{equation}\label{g}
\phi\colon  S\to\D,\quad \phi(z)=\frac{\exp\!\left(\displaystyle\frac{\pi i}{2x_0}(z-z_0)\right)-1}{\exp\!\left(\displaystyle\frac{\pi i}{2x_0}(z-z_0)\right)+1}
\end{equation}
is biholomorphic and satisfies
\begin{equation}\label{h}
|\phi'(z_0)|=\frac{\pi}{4|x_0|}.
\end{equation}
Applying Landau's theorem to $h=g\circ\phi^{-1}$ we find that
\begin{equation}\label{i}
|h'(0)|\leq 2|h(0)|\left(\big{|}\log|h(0)|\big{|}+A\right)
\end{equation}
and hence, since $h(0)=g(z_0)$,
\begin{equation}\label{j}
\begin{aligned}
|g'(z_0)|&=|h'(0)|\cdot |\phi'(z_0)|
\\ &
\leq 2|h(0) \left(\big{|}\log|h(0)|\big{|}+A\right)\frac{\pi}{4|x_0|}\\
&=\frac{\pi}{2|x_0|}|g(z_0)|\left(\big{|}\log|g(z_0)|\big{|}+A\right).
\end{aligned}
\end{equation}

Suppose now that \eqref{b} does not hold. Then there exists $z_1,z_2\in S$ with $\re z_1=\re z_2=x_0$ and $|\im z_1-\im z_2|\leq \pi$ such that $|g(z_1)|=1$, $|g(z_2)|=M(\sqrt{C},f)$ and $|g(z)|\geq 1$ for $z$ on the line segment connecting $z_1$ and $z_2$.

With suitable branches of the logarithm we deduce from \eqref{j}, which holds
for every $z_0\in[z_1,z_2]$, that
\begin{equation}\label{k}
\begin{aligned}
& \, |\log(\log g(z_2)+A)-\log(\log g(z_1)+A)| \\ 
=& \left|\int^{z_2}_{z_1}\frac{g'(z)}{g(z)(\log g(z)+A)}dz\right|
\leq \int^{z_2}_{z_1}\frac{|g'(z)|}{|g(z)|(\log |g(z)|+A)}|dz|\\
\leq & 
\, \frac{\pi}{2|x_0|}|z_2-z_1|\leq \frac{\pi^2}{2|x_0|}
=\frac{\pi^2}{\log\frac 1 C}.
\end{aligned}
\end{equation}
On the other hand, noting that $|g(z_1)|=1$, for a suitable branch of the logarithm we have
\[\begin{aligned}
  & \,     |\log(\log g(z_2)+A)-\log(\log g(z_1)+A)| \\
 \geq & \log|\log g(z_2)+A|-\log|\log g(z_1)+A|\\
 \geq & \log(\log |g(z_2)|+A)-\log | i \arg g(z_1)+A|\\
 \geq & \log(\log M(\sqrt{C},f)+A)-\log\sqrt{A^2+\pi^2}.
\end{aligned}\]
Combining the last two estimates we obtain
\[\log(\log M(\sqrt{C},f)+A)\leq \log\sqrt{A^2+\pi^2}+\frac{\pi^2}{\log \frac 1 C}.\]
Together with \eqref{e} this yields that
\[\log\!\left(2\log \frac{1-C}{2\sqrt{C}}+A\right)\leq\log \sqrt{A^2+\pi^2}+\frac{\pi^2}{\log \frac 1 C}.\]
This condition contradicts the choice of $C$.
\end{proof}

\begin{proof}[Proof of Theorem~\ref{thm4}]
Let $f$ and $r$ be as in the theorem. We may assume that $f$ has at least two zeros,
since otherwise we can consider $1-f(-z)$ instead of $f(z)$.
Lemma~\ref{lemma7} implies that if $r<s<1$, then 
\begin{equation}\label{7a}
\min_{|z|=s}|f(z)|\leq 1.
\end{equation}
In particular, 
\begin{equation}\label{7b}
\min_{|z|=\sqrt{r}}|f(z)|\leq 1.
\end{equation}
Lemma~\ref{lemma8} now yields that $r\geq 0.000024$.
\end{proof}

\begin{ack}
We thank Xiao Yao for drawing our attention to Lai's paper~\cite{Lai1979}
and the referee who helped to improve the exposition.
\end{ack}

\noindent 
W. B.: Mathematisches Seminar\\
Christian-Albrechts-Universit\"at zu Kiel\\
Ludewig-Meyn-Str.\ 4\\
24098 Kiel\\
Germany\\

\noindent
Email: bergweiler@math.uni-kiel.de

\medskip

\noindent 
A. E.: Department of Mathematics\\
Purdue University\\
West Lafayette, IN 47907\\
USA\\

\noindent
Email: eremenko@math.purdue.edu
\end{document}